\theoremstyle{plain}
\newtheorem{theorem}{Theorem}[section]
\theoremstyle{plain}
\newtheorem{proposition}[theorem]{Proposition}
\theoremstyle{plain}
\newtheorem{lemma}[theorem]{Lemma}
\theoremstyle{plain}
\theoremstyle{plain}
\theoremstyle{plain}
\newtheorem{definition}[theorem]{Definition}
\theoremstyle{plain}
\theoremstyle{remark}
\newtheorem{remark}[theorem]{Remark}
\theoremstyle{remark}
\theoremstyle{remark}
\newtheorem{acknowledgment}{Acknowledgment\hspace{-1mm}}
\title[Indeterminate moments]
{Indeterminacy of the moment problem for symmetric probability measures}
\author{Hayato Saigo}
\author{Hiroki Sako}
\address
{Hayato Saigo,
Nagahama Institute of Bio-Science and Technology,
Nagahama 526-0829, Japan}
\email{h\_saigoh@nagahama-i-bio.ac.jp}
\address
{Hiroki Sako, 
School of Science, Tokai University, Kanagawa, 259-1292 Japan}
\email{hiroki@tokai-u.jp}
\keywords{Probability Measure, 
Moment Problem, Jacobi Sequence}
\subjclass[2000]{47A57, 44A60}
\begin{document}
\maketitle
\begin{abstract}
In this paper, the moment problem for symmetric probability measures 
is characterized in terms of 
associated sequences called Jacobi sequences $\{\omega_n\}$. 
A notion named property (SC), 
which is proved to be a necessary and sufficient condition for the indeterminacy of the moment problem, 
naturally arises from the viewpoint of finite dimensional approximation for infinite matrices. 
We prove that the moment 
problem for q-Gaussian not only for $q>1$ but also for $q<-1$ is indeterminate.
We also prove
that hyperbolic secant distribution 
is ``the last probability measure'' which is uniquely determined by 
the moment sequence of power type, just by checking property (SC) with quite easy calculation.
\end{abstract}

\section{Introduction}
The moment sequence of a probability measure is, if it exists, one of the most important information of the measure. 
Then, does  the moment sequence determine the measure uniquely or not?---This fundamental question called 
``moment problem'' (or more precisely ``determinate moment problem'' ) has 
been attracted many mathematicians since the work by Stieltjes \cite{Stieltjes}, who proposed and solved the moment problem for $\mathbb{R}_{\geqq 0}$. It was Hamburger (\cite{Hamburger1}, \cite{Hamburger2}, and \cite{Hamburger3}) who extended the problem for $\mathbb{R}$, on which we study here.
(For the 
subject and its history, see Akhiezer \cite{Akhiezer} and Shohat--Tamarkin \cite{ShohatTamarkin}). 
If the measure is unique, the problem is said to be determinate, and otherwise, indeterminate.
In the present paper, we discuss the moment problem 
in terms of the associated sequence which we call ``Jacobi sequence''. 

The sequence of orthogonal polynomials also provides useful
data for the probability measure $\mu$ whose moments 
${\{M_n = \int_\mathbb{R} x^n d \mu\}}$ are finite.
Then the space of polynomial functions is contained in the Hilbert space $L^2(\mathbb{R},\mu )$. 
A Gram-Schmidt procedure provides orthogonal polynomials which only depend on the moment sequence.
Let $\{P_n(x) \}_{n=0, 1, \cdots}$ be the monic orthogonal polynomials of $\mu$. 
Then there exist sequences $\{\alpha_n\}_{n=0, 1, \cdots}$ 
and $\{\omega_n\}_{n= 1, 2, \cdots}$ such that 
\begin{eqnarray}\label{equation3term}
\quad x P_n(x) 
= P_{n+1}(x) + \alpha_{n+1} P_n(x) + \omega_n P_{n-1}(x), \quad P_{-1}(x) = 0.
\end{eqnarray}
These sequences $\{\omega_n\}$ and $\{\alpha_n\}$ are called ``Jacobi sequences'' associated to $\mu$. 
The Jacobi sequences have the same amount of data as the moment sequence.

In this paper, we will concentrate on the case that
$\mu$ is symmetric, i.e., $\mu(-dx)=\mu(dx)$. 
Then $\alpha_{n}$ are all zero.
We use the term 
``Jacobi sequence'' to indicate the sequence $\{\omega_n\}$.
It is also known that the real numbers $\omega_n$ are all greater than $0$ if and only if
$\mu$ has infinite support. 
Since the moment problem is trivial for measures with finite support, we always assume that $\{\omega_n\}$ are all positive.

In the present paper, a new notion ``property (SC)''
%\footnote{
%Since the announcement of this result, 
%many experts have expressed their impression that
%property (SC) has been already discovered.
%Of course, in a logical sense, there exist many conditions which are equivalent to property (SC), because the determinate moment problem is classical.
%The problem has been rephrased in terms of 
%(absolute) convergence or divergence of several kinds of sequences.
%However, the authors did not find similarity between known conditions and our property (SC).
%Our motivation for the definition of property (SC) is 
%to give a concise and self-contained proof for the indeterminacy
%for interesting measures in quantum probability theory.
%As an example, 
%our characterization implies the new theorems (Theorem \ref{introQGauss} and Theorem \ref{introTheoremHSD}).  
%} 
is defined for infinite Jacobi sequences and 
is proved to be equivalent to the indeterminacy of the moment problem. 
Section \ref{sectionfluctuation} 
is devoted to formulate the notion. In Section \ref{sectionselfadjoint}, it is shown that property (SC) leads to a construction of two distinct 
self-adjoint operators and proved to imply the indeterminacy of the moment problem.
We will investigate basic examples of Jacobi sequences with property (SC), $q$-deformed integers 
$[n]_q = 1 + q + q^2 + \cdots + q^{n -1}$ ($q>-1$), $\displaystyle{(-1)^{n-1} \frac{q^n - 1}{q - 1}}$ ($q<-1$)   
and powers of integers $n^p$ ($p>2$) in Section \ref{sectionexamples}. 
The following theorems are consequences:

\begin{theorem}\label{introQGauss}
The moment problem for $q$-Gaussians ($q\in\mathbb{R}$) are indeterminate if and only if $|q|>1$. 
\end{theorem}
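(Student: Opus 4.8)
The plan is to deduce the theorem from the equivalence between property (SC) and indeterminacy, so that everything reduces to a computation on the Jacobi sequence of the $q$-Gaussian. First I would recall the classical description of the $q$-Gaussian through its orthogonal polynomials, the $q$-Hermite polynomials: the three-term recurrence \eqref{equation3term} for the (symmetric) $q$-Gaussian has Jacobi sequence $\omega_n = [n]_q = 1 + q + \cdots + q^{n-1}$ for $q > -1$, and, after the appropriate normalization for $q < -1$, the sequence $\omega_n = (-1)^{n-1}\,\frac{q^n - 1}{q - 1}$. A short case analysis on the parity of $n$ shows $\omega_n > 0$ in the range $q < -1$, so the standing positivity hypothesis on $\{\omega_n\}$ holds and the support is infinite. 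These are exactly the two families singled out in Section \ref{sectionexamples}, and hence the theorem becomes the assertion that property (SC) holds for them precisely when $|q| > 1$.

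For the determinate direction $|q| \le 1$ I would argue case by case. If $|q| < 1$ then $[n]_q \to (1 - q)^{-1}$ is a bounded sequence, so the associated Jacobi matrix defines a bounded symmetric operator; its closure is bounded and self-adjoint, the measure is compactly supported, and the moment problem is therefore determinate, i.e. property (SC) fails. If $q = 1$ then $\omega_n = n$ and the measure is the standard Gaussian, which is determinate (for instance by Carleman's condition, since $\sum_n \omega_n^{-1/2} = \sum_n n^{-1/2} = \infty$); by the equivalence, property (SC) again fails. The value $q = -1$ is degenerate, as $[n]_{-1}$ vanishes for even $n$, giving finite support and a trivially determinate problem.

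For the indeterminate direction $|q| > 1$ the decisive feature is geometric growth. For $q > 1$ one has $\omega_n = \frac{q^n - 1}{q - 1} \sim \frac{q^n}{q - 1}$, and for $q < -1$ one has $|\omega_n| \sim \frac{|q|^n}{|q - 1|}$; in both cases $\omega_n$ grows like $|q|^n$. I would then verify property (SC) by showing that the series or product expressions entering its definition converge, the convergence being driven by factors that decay geometrically like $|q|^{-n}$ (or powers thereof) because of the exponential size of $\omega_n$. Invoking the equivalence between property (SC) and indeterminacy established earlier then yields indeterminacy, finishing the proof.

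The hard part will be the precise verification of property (SC) for the family with $q < -1$, where the alternating factor $(-1)^{n-1}$ in $\omega_n$ clutters the bookkeeping: one must check that only the magnitudes $|q|^n$ govern the relevant estimate and that the sign pattern produces no cancellation that could invalidate it. A second delicate point is the sharpness at the boundary $|q| = 1$, where the property (SC) criterion must switch from failure to success exactly as $|q|$ crosses $1$, in agreement with the classical determinacy of the standard Gaussian at $q = 1$ and the compactly supported case for $|q| < 1$.
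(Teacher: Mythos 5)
Your proposal is correct and follows essentially the same route as the paper: reduce to property (SC) via the equivalence of Theorem \ref{equivalence}, verify (SC) for $|q|>1$ from the geometric growth $\omega_{n+1}/\omega_n \to |q|$ (this is exactly condition $(*)$ and Theorem \ref{TheoremHighGrowth}, where the convergence of $\sum_n C_n$ is driven by the geometric decay of $1/\omega_n$), and dispose of $|q|\leqq 1$ by Carleman's condition using $[n]_q \leqq n$. Your minor variations (handling $|q|<1$ by boundedness of the Jacobi matrix rather than Carleman, and noting explicitly that the sign factor for $q<-1$ only serves to make $\omega_n$ positive) are sound but do not change the argument's structure.
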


In the case of $q > 1$, Ismail and Masson proved that 
the moment problem is indeterminate, identifying extremal measures \cite{IsmailMasson}.

\begin{theorem}\label{introTheoremHSD}
Let $p$ be a positive number.
The probability measure corresponding to the Jacobi sequence
$\{n^p\}$ is unique if and only if $p \leqq 2$.
\end{theorem}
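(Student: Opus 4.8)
The plan is to deduce the theorem directly from the equivalence established in Section~\ref{sectionselfadjoint}, namely that property (SC) holds if and only if the moment problem is indeterminate. Thus it suffices to show that the symmetric Jacobi sequence $\omega_n = n^p$ enjoys property (SC) precisely when $p>2$; the uniqueness (determinacy) range $p\leqq 2$ then corresponds exactly to the failure of property (SC). First I would pass from the monic polynomials $P_n$ to the orthonormal ones $p_n = P_n/\|P_n\|$. Because $\mu$ is symmetric the diagonal vanishes ($\alpha_n=0$), and the orthonormal three-term recurrence reads $x\,p_n(x) = b_{n+1}\,p_{n+1}(x) + b_n\,p_{n-1}(x)$ with off-diagonal weights $b_n = \sqrt{\omega_n} = n^{p/2}$. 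The concrete quantity that property (SC) asks me to control is the square-summability of the recurrence's $\ell^2$-solution evaluated at the real point $0$, i.e.\ of the sequence $\{p_n(0)\}$ (together with the companion second-kind solution).

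The computation is then the promised easy one. Evaluating the recurrence at $x=0$ gives $b_{n+1}\,p_{n+1}(0) = -\,b_n\,p_{n-1}(0)$, so with $p_0(0)=1$ and $p_1(0)=0$ the odd-indexed values vanish and, telescoping the even ones,
\[
p_{2k}(0) = (-1)^k \prod_{j=1}^{k} \frac{b_{2j-1}}{b_{2j}}
= (-1)^k \left( \frac{(2k-1)!!}{(2k)!!} \right)^{p/2}.
\]
Wallis' asymptotics give $\dfrac{(2k-1)!!}{(2k)!!} \sim \dfrac{1}{\sqrt{\pi k}}$, hence $|p_{2k}(0)|^{2} \sim (\pi k)^{-p/2}$, and the same estimate holds for the odd-indexed second-kind solution. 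Therefore $\sum_n |p_n(0)|^2 \asymp \sum_k k^{-p/2}$ converges if and only if $p/2>1$, that is, if and only if $p>2$. Consequently property (SC) holds exactly for $p>2$, the moment problem is indeterminate exactly for $p>2$, and the measure is unique exactly for $p\leqq 2$. As a cross-check, the determinate range is consistent with Carleman's condition, since $\sum_n b_n^{-1} = \sum_n n^{-p/2}=\infty$ for $p\leqq 2$; and the borderline $p=2$, where the series diverges harmonically, is the hyperbolic secant distribution, the last uniquely determined power-type case.

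The only genuinely delicate point is the critical exponent $p=2$: here one must use the \emph{sharp} Wallis power $-\tfrac12$ in $(2k-1)!!/(2k)!! \sim (\pi k)^{-1/2}$ rather than a crude bound, since the distinction between convergence and (harmonic) divergence of $\sum_k k^{-p/2}$ is exactly what separates indeterminacy from determinacy at the boundary. The remaining step requiring care is to confirm that property (SC), as formulated in Section~\ref{sectionfluctuation} via finite-dimensional approximation, specializes for a symmetric zero-diagonal Jacobi sequence to precisely the $\ell^2$-summability of the solutions $\{p_n(0)\}$ computed above; granting that reduction, everything else is the elementary telescoping product and its asymptotic evaluation.
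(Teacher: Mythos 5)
Your computation at the origin is correct as far as it goes ($p_{2k}(0)^2=\bigl((2k-1)!!/(2k)!!\bigr)^{p}\sim(\pi k)^{-p/2}$, so $\sum_n p_n(0)^2<\infty$ exactly for $p>2$), but the step you defer in your last paragraph is not a routine ``specialization'' --- it is the crux of the argument, and as you describe it, it is not what property (SC) says. Property (SC) is the condition $\sum_n C_n<\infty$ with $C_n=B_{n-1}/B_n$, and since the monic polynomials satisfy $P_{n+1}(i)=i^{n+1}B_n$, the quantity $C_n/\prod_{l\leqq n}(1-C_l)$ is, up to normalization, $|p_n(i)|^2$; this is exactly how Theorem \ref{equivalence} is proved. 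In other words, property (SC) governs the $\ell_2$-behaviour of the solution of the recurrence at the \emph{non-real} point $i$, not at $0$. The numbers $C_n$ and $p_n(0)^2$ are genuinely different (e.g.\ $C_2=(1+\omega_1)/(1+\omega_1+\omega_2)$ while $p_2(0)^2=\omega_1/\omega_2$), and the assertion that $\sum_n C_n$ and $\sum_n\bigl(p_n(0)^2+q_n(0)^2\bigr)$ converge or diverge together amounts to the invariance of the Weyl limit-point/limit-circle alternative under moving the spectral parameter from $i$ to the real point $0$ --- a classical but substantive theorem that neither you nor the paper establishes. Without it, your estimate at $x=0$ proves nothing about property (SC), in either direction, so both halves of the theorem are left hanging.

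If you are willing to import that classical criterion (indeterminacy holds if and only if all solutions of $(X^*-z)u=0$ lie in $\ell_2(\mathbb{N})$ for some, hence every, $z\in\mathbb{C}$, including real $z$), then your telescoping product and Wallis asymptotics do give a clean and correct proof, genuinely different from the paper's: the paper never leaves the point $i$, instead bounding $C_{2n}$ by the explicit sums of ratios in Lemma \ref{LemmaEstimateC} and using the elementary inequality $\omega_n/\omega_{n+1}<\sqrt{\omega_n/\omega_{n+2}}$ of Lemma \ref{LemmaEstimateSQRT} to obtain $\sum_n C_n<2^p\bigl(\sum_n n^{-p/2}\bigr)^2<\infty$ for $p>2$, and then invoking Carleman's condition for $p\leqq 2$. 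Note also that what you present as a ``cross-check'' (Carleman) is in fact the cleanest available proof of the determinate half within the paper's framework, since the divergence of $\sum_n p_n(0)^2$ yields determinacy only through the same unproved Weyl invariance.
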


\noindent
The Jacobi sequence of the the 
hyperbolic secant distribution 
$\displaystyle{\frac{dx}{e^{\pi x / 2} + e^{- \pi x / 2}}}$ on $\mathbb{R}$ is $\{n^2\}_{n = 1}^\infty$.
The theorem above means that the 
hyperbolic secant distribution 
is ``the last probability measure'' which is uniquely determined by 
the moment sequence of power type.
In Section \ref{sectiondeterminate}, we prove that property (SC)
holds 
if and only if the corresponding probability measure is not unique. 
\\

\paragraph
{\bf Notations and Conventions}
\begin{itemize}
\item 
$\mathbb{N}$ is the set of non-negative integers $\{0, 1, 2, \cdots \}$.
\item
$\ell_2(\mathbb{N})$ stands for the set of all the
square summable sequences.
\item
Identify the finite dimensional Hilbert space $\ell_2(\{0, 1, \cdots , m\})$ with the closed subspace of $\ell_2(\mathbb{N})$.
\item
We regard a vector $\eta$ in $\ell_2(\{0, 1, \cdots , m\})$ or 
in $\ell_2(\mathbb{N})$ as a column vector.
For an natural number $l$, 
$\,^l \eta$ stands for the 
$l$-th entry of the vector.
\item
For $k \in \mathbb{N}$,
$\delta_k$ stands for a unit vector given by
$\displaystyle{
\,^l \delta_k = \left\{
\begin{array}{cc}
0, & l \neq k,\\
1, & l = k.
\end{array}
\right.}$
\item
Fixing the orthonormal basis $\{\delta_k\}$, we identify
the set of linear operators 
$\mathbb{B}(\ell_2(\{0, 1, \cdots, m\}))$ 
with the set of complex matrices $\mathbb{M}(m + 1)$.
\item
For an operator $Y \in \mathbb{B}(\ell_2(\{0, 1, \cdots, m\}))$,
$\,^l Y_k$ is the coefficient 
$\langle Y \delta_k, \delta_l \rangle$.
\item
We write $Y_k$ for the $k$-th column vector  
$(\,^l Y_k)_{l = 0}^m$ of $Y$.
\item
We use the same notations for an operator $Y \in \mathbb{B}(\ell_2(\mathbb{N}))$.
That is, $\,^l Y_k$ stands for 
$\langle Y \delta_k, \delta_l \rangle$ and
$Y_k$ stands for $(\,^l Y_k)_{l = 0}^\infty \in \ell_2(\mathbb{N})$.
\item
$
\displaystyle{
\frac{a_1}{b_1}
\begin{array}{c} \\ + \end{array}
\frac{a_2}{b_2}
\begin{array}{ccc}
  &           &  \\ 
+& ... &+
\end{array}
\frac{a_m}{b_m}}$
is the continued fraction 
$
\displaystyle{
\frac{a_1}{b_1 +
\displaystyle{\frac{a_2}{b_2 + 
\begin{array}{cc}
\ddots\\
b_{m -1} + \displaystyle{\frac{a_m}{b_m
}}
\end{array}
}}
}_.}$
\end{itemize}

\section{Fluctuation property of the Stieltjes transform at $i$}\label{sectionfluctuation}
The three-term recurrence relation (equation (\ref{equation3term})) give a bijective correspondence between
the moment sequences of symmetric probability measures
\begin{eqnarray*}
\{ \{M_n\}_{n = 0}^\infty ;
\exists \textrm{ prob. } \mu, 
d \mu(x) = d\mu(-x), M_n = \int_\mathbb{R} x^n d \mu,
\sharp(\mathrm{supp} \mu) = \infty
\},
\end{eqnarray*}
and the infinite Jacobi sequences 
$\{\{\omega_n \}_{n = 1}^\infty ; \omega_n > 0\}$.
For an infinite Jacobi sequence $\{\omega_n\}_{n = 1}^\infty$,
denote by $X$ the infinite matrix corresponding to
the sequence:
\begin{eqnarray*}
X =
\left(
\begin{array}{cccccccc}
0             & \sqrt{\omega_1} & 0              & \cdots \\
\sqrt{\omega_1} & 0             & \sqrt{\omega_2}  &            \\
0             & \sqrt{\omega_2} & 0              & \ddots \\
\vdots     &                & \ddots      & \ddots \\
\end{array}
\right).
\end{eqnarray*}
We regard $X$ as an operator defined on $\oplus_{\mathbb{N}} \mathbb{C} \subset \ell_2(\mathbb{N})$.
The operator $X$ is a densely defined symmetric operator on the Hilbert space $\ell_2(\mathbb{N})$.
We note that $X$ is not essentially self-adjoint in general and that $X^*$ is not necessarily self-adjoint.
In the case that the growth rate of the Jacobi sequence $\{\omega_n\}$ is large,
there exist more than one self-adjoint operators between $X$ and $X^*$.
We show the existence of these operators, by exploiting finite dimensional approximations of the operator $X$.

Let $X_m$ denote the $(m + 1) \times (m +1)$ matrix \begin{eqnarray*}
X_m =
\left(
\begin{array}{cccccccc}
0             & \sqrt{\omega_1} & 0              & \cdots  & 0              \\
\sqrt{\omega_1} & 0             & \sqrt{\omega_2}  & \ddots  & \vdots      \\
0             & \sqrt{\omega_2} & 0              & \ddots  & 0             \\
\vdots     &  \ddots   & \ddots      & \ddots  & \sqrt{\omega_m} \\
0             & \cdots     & 0              & \sqrt{\omega_m} & 0
\end{array}
\right)
\end{eqnarray*}
corresponding to the finite Jacobi sequence $\{\omega_1, \omega_2, \cdots, \omega_m\}$.
We regard $X_m$ as an operator acting on the Hilbert space
$\ell_2(\{0, 1, \cdots , m\})$.

We begin with investigating the Stieltjes transform $G_{m}(i)$ of $X_m$ at $i$ with respect to the state $\langle \cdot \delta_0, \delta_0 \rangle$. 
The complex number $G_{m}(i)$ is equal to the upper-left matrix coefficient of $(i - X_m)^{-1}$.
This quantity is expressed by the continued fraction
\footnote{This equality has been shown in many references. See, e.g., Hora-Obata \cite[Lemma 1.87]{HoraObata}.
This is also shown in the first equation of Lemma \ref{Lemma0Y}.}
\begin{eqnarray*}
\frac{1}{i}
\begin{array}{c} \\ -\end{array}
\frac{\omega_1}{i}
\begin{array}{c} \\ -\end{array}
\frac{\omega_2}{i}
\begin{array}{ccc}
  &           &  \\ 
-& \cdots &-
\end{array}
\frac{\omega_m}{i}.
\end{eqnarray*}
The quantity $i G_{m}(i)$ is simply expressed as
$\displaystyle{\frac{1}{1}
\begin{array}{c} \\ + \end{array}
\frac{\omega_1}{1}
\begin{array}{c} \\ + \end{array}
\frac{\omega_2}{1}
\begin{array}{ccc}
   &           &  \\ 
+ & \cdots & + 
\end{array}
\frac{\omega_m}{1}}.$
To study 
$\{i G_{m}(i)\}_{m =1}^\infty$,
we define sequences $\{A_m\}_{m =-1}^\infty$ and $\{B_m\}_{m =-1}^\infty$ by 
\begin{eqnarray*}
A_{-1} = 0, &A_0 = 1, &A_m = A_{m-1} + \omega_m A_{m - 2}, \\
B_{-1} = 1, &B_0 = 1, &B_m = B_{m-1} + \omega_m B_{m - 2}.
\end{eqnarray*}

\begin{lemma}\label{LemmaFraction}
The Stieltjes transform $G_m(i)$ of $X_m$ is expressed by
\begin{eqnarray*}
iG_{m}(i) = 
\frac{A_m}{B_m}.
\end{eqnarray*}
\end{lemma}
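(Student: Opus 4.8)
The plan is to treat the continued-fraction expression for $iG_m(i)$ recorded just above the statement as given (it is the standard Stieltjes-transform formula cited in the footnote there), and to evaluate that finite continued fraction as the ratio $A_m/B_m$. The sequences $\{A_m\}$ and $\{B_m\}$ are, up to an index shift, precisely the numerators and denominators of the successive convergents of this continued fraction, so the content of the lemma is essentially the fundamental recurrence for continued-fraction convergents.

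The main obstacle is that a naive induction on $m$ does not work directly: enlarging the Jacobi sequence lengthens the continued fraction at its innermost (bottom) entry rather than at the top, so one cannot peel off an outer layer and apply the inductive hypothesis. I would circumvent this in the usual way, by promoting the bottom entry to a free variable. Concretely, for a complex parameter $t$ define
\[
K_m(t) = \cfrac{1}{1 + \cfrac{\omega_1}{1 + \cfrac{\omega_2}{1 + \cdots + \cfrac{\omega_m}{t}}}},
\]
the same continued fraction but with the final $1$ replaced by $t$, so that $K_m(1) = iG_m(i)$. I claim that
\[
K_m(t) = \frac{A_{m-1}\,t + \omega_m A_{m-2}}{B_{m-1}\,t + \omega_m B_{m-2}}
\qquad (m \geq 1).
\]

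This stronger identity can be proved by induction on $m$. The base case $m = 1$ reduces to $K_1(t) = 1/(1 + \omega_1/t) = t/(t + \omega_1)$, which matches the right-hand side since $A_0 = B_0 = 1$, $A_{-1} = 0$, and $B_{-1} = 1$. For the inductive step I use the elementary observation that passing from $m$ to $m+1$ amounts to the substitution $t \mapsto 1 + \omega_{m+1}/t$ at the bottom, i.e.\ $K_{m+1}(t) = K_m\!\left(1 + \omega_{m+1}/t\right)$. Feeding $t' = (t + \omega_{m+1})/t$ into the inductive hypothesis, clearing the factor $t$ from numerator and denominator, and collecting terms using the defining recurrences $A_m = A_{m-1} + \omega_m A_{m-2}$ and $B_m = B_{m-1} + \omega_m B_{m-2}$ yields exactly the asserted formula with $m$ replaced by $m+1$. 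This is a short M\"obius-transformation computation with no genuine difficulty once the right variable has been introduced.

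Finally, specializing to $t = 1$ and applying the recurrences once more gives
\[
iG_m(i) = K_m(1) = \frac{A_{m-1} + \omega_m A_{m-2}}{B_{m-1} + \omega_m B_{m-2}} = \frac{A_m}{B_m},
\]
which is the claim of the lemma. The only real idea is the generalization of the bottom entry to a variable; everything else is bookkeeping with the two linear recurrences.
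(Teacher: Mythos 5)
Your proof is correct: the induction on $K_m(t)$ with the bottom entry freed to a variable is the standard derivation of the fundamental recurrence for continued-fraction convergents, and both the base case and the substitution $K_{m+1}(t)=K_m(1+\omega_{m+1}/t)$, followed by regrouping into $(A_{m-1}+\omega_m A_{m-2})t+\omega_{m+1}A_{m-1}$ over the analogous denominator, check out. The paper's own proof obtains exactly this recurrence by citing equation (1.71) of Hora--Obata (after setting $A_{-2}=1$, $B_{-2}=0$, $\omega_0=1$ so the recurrence applies uniformly), so you have simply supplied a self-contained proof of the identity the paper imports; the approach is otherwise the same.
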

\begin{proof}
Define $A_{-2}$ and $B_{-2}$ by $A_{-2} = 1$, $B_{-2} = 0$.
Define $\omega_0$ by $1$.
Applying the equation (1.71) of Hora--Obata \cite{HoraObata}, 
we obtain the proposition. 
\end{proof}

\begin{lemma}\label{LemmaCommutationRelation}
For $m \geqq 1$, $A_m B_{m-1} - B_m A_{m-1} 
= (-1)^m \omega_1 \omega_2 \cdots \omega_m$.
\end{lemma}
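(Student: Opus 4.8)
The plan is to prove the identity by induction on $m$, exploiting the fact that $\{A_m\}$ and $\{B_m\}$ satisfy the \emph{same} second-order linear recurrence $C_m = C_{m-1} + \omega_m C_{m-2}$ and differ only in their initial data. This is the classical determinant (Wronskian-type) identity for the convergents of a continued fraction, and the cleanest route is to isolate the single quantity $D_m := A_m B_{m-1} - B_m A_{m-1}$ and show that it obeys a simple first-order recurrence, which then telescopes.

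First I would settle the base case $m = 1$. From the definitions, $A_1 = A_0 + \omega_1 A_{-1} = 1$ and $B_1 = B_0 + \omega_1 B_{-1} = 1 + \omega_1$, so $D_1 = A_1 B_0 - B_1 A_0 = 1 - (1 + \omega_1) = -\omega_1 = (-1)^1 \omega_1$, exactly as claimed.

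For the inductive step, the key computation is to substitute $A_m = A_{m-1} + \omega_m A_{m-2}$ and $B_m = B_{m-1} + \omega_m B_{m-2}$ into $D_m$. The crucial feature is that the two $A_{m-1} B_{m-1}$ cross terms cancel, leaving $D_m = \omega_m\bigl(A_{m-2} B_{m-1} - B_{m-2} A_{m-1}\bigr) = -\omega_m\bigl(A_{m-1} B_{m-2} - B_{m-1} A_{m-2}\bigr) = -\omega_m D_{m-1}$. Iterating this relation from the base case telescopes the product and gives $D_m = (-1)^{m} \omega_1 \omega_2 \cdots \omega_m$.

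Honestly, there is no serious obstacle here: the identity is elementary once one isolates $D_m$ and observes the cancellation. The only points demanding care are the sign bookkeeping (the single factor $-\omega_m$ introduced at each step is precisely what produces both the alternating sign $(-1)^m$ and the accumulating product) and the index conventions for the initial terms $A_{-1}, B_{-1}$. The interest of the lemma is that $D_m$ controls the gap between consecutive Stieltjes approximants $A_m/B_m = iG_m(i)$ via Lemma \ref{LemmaFraction}, so pinning down its exact value is what will later let one analyze the convergence or fluctuation of $\{iG_m(i)\}$ underlying property (SC).
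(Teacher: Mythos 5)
Your proof is correct and follows essentially the same route as the paper: induction on $m$, with the base case $A_1B_0 - B_1A_0 = -\omega_1$ and the inductive step substituting the recurrences so that the cross terms cancel, yielding $D_m = -\omega_m D_{m-1}$.
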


\begin{proof}
We prove by induction.
For the case of $m = 1$,
we have
\[
A_1 B_0 - B_1 A_0 = 1 \cdot 1 - (1 + \omega_1) \cdot 1 = - \omega_1.
\]
Assume that the lemma holds.
Then we have
\begin{eqnarray*}
A_{m + 1} B_m - A_m B_{m + 1} 
&=&
(A_m + \omega_{m + 1} A_{m - 1}) B_m -
A_m (B_m + \omega_{m + 1} B_{m - 1})\\
&=& 
- \omega_{m + 1} (A_m B_{m-1} - B_m A_{m-1})\\
& =& (-1)^{m + 1} \omega_1 \cdots \omega_m \omega_{m + 1}.
\end{eqnarray*}\end{proof}

\begin{lemma}\label{LemmaPropositionDifference}
For $m \geqq 2$, $i G_{m}(i) - i G_{m-1} (i) =
\displaystyle{
\frac{(-1)^m \omega_1 \omega_2 \cdots \omega_m}
{B_{m - 1} B_m}
}
$
\end{lemma}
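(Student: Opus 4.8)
The plan is to reduce everything to the two preceding lemmas by a single subtraction of fractions. First I would invoke Lemma \ref{LemmaFraction}, which identifies $i G_m(i) = A_m / B_m$ and likewise $i G_{m-1}(i) = A_{m-1} / B_{m-1}$. With these substitutions the quantity $i G_m(i) - i G_{m-1}(i)$ becomes a difference of two explicit fractions, and the natural move is to place them over the common denominator $B_{m-1} B_m$, yielding
\begin{eqnarray*}
i G_m(i) - i G_{m-1}(i)
= \frac{A_m}{B_m} - \frac{A_{m-1}}{B_{m-1}}
= \frac{A_m B_{m-1} - A_{m-1} B_m}{B_{m-1} B_m}.
\end{eqnarray*}

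The numerator $A_m B_{m-1} - A_{m-1} B_m$ is precisely the expression controlled by Lemma \ref{LemmaCommutationRelation}, which evaluates it to $(-1)^m \omega_1 \omega_2 \cdots \omega_m$. Substituting this value into the numerator gives exactly the claimed formula, so the body of the argument is a one-line application of the commutation relation followed by the fraction identity. I would also remark, for the sake of the division being legitimate, that each $B_m$ is strictly positive: the recursion $B_m = B_{m-1} + \omega_m B_{m-2}$ together with $B_{-1} = B_0 = 1$ and the standing assumption $\omega_n > 0$ forces $B_m > 0$ for all $m$ by an immediate induction, so the denominator never vanishes.

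I do not anticipate a genuine obstacle here, since both ingredients are already established in the excerpt; the only point requiring a moment of care is bookkeeping of the sign and the index on the telescoping numerator, making sure the orientation $A_m B_{m-1} - A_{m-1} B_m$ matches the form $A_m B_{m-1} - B_m A_{m-1}$ appearing in Lemma \ref{LemmaCommutationRelation} rather than its negative. Once that alignment is checked, the proof is complete.
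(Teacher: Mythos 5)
Your proof is correct and follows exactly the same route as the paper's: apply Lemma \ref{LemmaFraction} to write $iG_m(i)=A_m/B_m$, combine over the common denominator $B_{m-1}B_m$, and evaluate the numerator via Lemma \ref{LemmaCommutationRelation}. Your added remark that $B_m>0$ (so the division is legitimate) is a small extra check the paper leaves implicit, but it does not change the argument.
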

\begin{proof}
By Lemma \ref{LemmaFraction},
we have
\[
i G_{m}(i) - i G_{m-1} (i) =
\frac{A_m}{B_m} - \frac{A_{m-1}}{B_{m-1}}
=
\frac{A_m B_{m-1} - A_{m-1} B_m}{B_m B_{m-1}}.
\]
Lemma \ref{LemmaCommutationRelation}
yields the desired equation.
\end{proof}
It follows that the sequence $\{i G_m(i)\}_{m = 1}^\infty$ alternately increases and decreases.
More precisely,
\begin{itemize}
\item
If $m$ is odd,
then 
$i G_{m-1} (i) > i G_{m}(i)$;
\item
If $m$ is even,
then 
$i G_{m-1} (i) < i G_{m}(i)$.
\end{itemize}
We next investigate the difference $| i G_{m} (i) - i G_{m -1}(i) |$.
\begin{proposition}
For $m \geqq 3$, $\displaystyle{
\frac{|i G_{m} (i) - i G_{m -1}(i)|}
{|i G_{m-1} (i) - i G_{m -2}(i)|}
=
1 - \frac{B_{m-1}}{B_m} < 1
}$.
\end{proposition}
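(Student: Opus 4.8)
The plan is to reduce everything to the explicit formula from Lemma~\ref{LemmaPropositionDifference} and then apply the recurrence defining $B_m$. First I would record that, since $\omega_n > 0$ for every $n$, each product $\omega_1 \omega_2 \cdots \omega_m$ is positive and every $B_m$ is positive: the base cases $B_{-1} = B_0 = 1$ are positive, and if $B_{m-2} > 0$ and $B_{m-1} > 0$ then $B_m = B_{m-1} + \omega_m B_{m-2} > 0$, so positivity follows by induction. With this in hand, Lemma~\ref{LemmaPropositionDifference} gives the two absolute values directly:
\[
|i G_m(i) - i G_{m-1}(i)| = \frac{\omega_1 \omega_2 \cdots \omega_m}{B_{m-1} B_m}, \qquad
|i G_{m-1}(i) - i G_{m-2}(i)| = \frac{\omega_1 \omega_2 \cdots \omega_{m-1}}{B_{m-2} B_{m-1}}.
\]

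Next I would form the quotient of these two expressions and cancel the common factors $\omega_1 \cdots \omega_{m-1}$ in the numerators and $B_{m-1}$ in the denominators, which leaves
\[
\frac{|i G_m(i) - i G_{m-1}(i)|}{|i G_{m-1}(i) - i G_{m-2}(i)|} = \frac{\omega_m B_{m-2}}{B_m}.
\]
I would then invoke the defining recurrence $B_m = B_{m-1} + \omega_m B_{m-2}$, which rearranges to $\omega_m B_{m-2} = B_m - B_{m-1}$; substituting this into the numerator yields $(B_m - B_{m-1})/B_m = 1 - B_{m-1}/B_m$, which is exactly the claimed equality. For the strict inequality I would appeal once more to the positivity of $B_{m-1}$ and $B_m$: since $B_{m-1}/B_m > 0$, we conclude $1 - B_{m-1}/B_m < 1$.

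I expect no genuine obstacle in this argument. The only point that really requires care is the positivity of the $B_m$, since it is precisely what both legitimizes dropping the absolute value signs and forces the resulting quotient to lie strictly below $1$; the remainder is an algebraic cancellation followed by a single substitution from the recurrence.
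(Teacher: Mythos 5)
Your proposal is correct and follows essentially the same route as the paper: apply Lemma \ref{LemmaPropositionDifference} to both differences, cancel to get $\omega_m B_{m-2}/B_m$, and use the recurrence $B_m = B_{m-1} + \omega_m B_{m-2}$ to rewrite this as $1 - B_{m-1}/B_m$. You merely make explicit the positivity of the $B_m$ (and hence the strict inequality), which the paper leaves implicit.
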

\begin{proof}
By Lemma \ref{LemmaPropositionDifference},
we have
\[
\frac{|i G_{m} (i) - i G_{m -1}(i)|}
{|i G_{m-1} (i) - i G_{m -2}(i)|}
=
\frac{\omega_1 \cdots \omega_m}{B_{m - 1} B_m}
\frac{B_{m - 2} B_{m - 1}}{\omega_1 \cdots \omega_{m - 1}}
=
\left| \frac{\omega_m B_{m -2}}{B_m}\right|
\]
By the definition of $B_m$,
the above quantity is equal to $(B_m - B_{m -1})/B_m$.
\end{proof}

The above proposition implies the following properties 
of $\{ i G_m(i) \}_{m = 1}^\infty$.
\begin{theorem}\label{TheoremSC}
\begin{itemize}
\item
The subsequence $\{i G_m(i) ; m \mathrm{\ even}\}$ strictly decreases.
\item
The subsequence $\{i G_m(i) ; m \mathrm{\ odd}\}$ strictly increases.
\item
If $n$ is an even natural number, $i G_n(i)$ is an upper bound of
$\{i G_m(i) ; m \mathrm{\ odd}\}$. 
\item
If $n$ is an odd natural number, $i G_n(i)$ is a lower bound of
$\{i G_m(i) ; m \mathrm{\ even}\}$. 
\end{itemize}
\end{theorem}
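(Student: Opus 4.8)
The plan is to reduce the entire theorem to two facts already in hand. Write $g_m := iG_m(i)$ and $d_m := g_m - g_{m-1}$. The first fact is that the consecutive differences alternate in sign, with $d_m > 0$ for even $m$ and $d_m < 0$ for odd $m$; this is exactly the observation recorded after Lemma \ref{LemmaPropositionDifference}. The second fact is that their magnitudes strictly decrease, $|d_m| < |d_{m-1}|$ for $m \geq 3$, which is the content of the Proposition immediately preceding the theorem. Granting these, the whole statement is the familiar behaviour of the partial sums of an alternating series whose terms shrink in absolute value.

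First I would dispatch the two monotonicity bullets simultaneously from the telescoping identity $g_{m+2} - g_m = d_{m+2} + d_{m+1}$. The two summands have opposite signs, and since $|d_{m+2}| < |d_{m+1}|$ (applying the Proposition with index $m+2 \geq 3$), their sum inherits the sign of the dominant term $d_{m+1}$. If $m$ is even then $m+1$ is odd, so $d_{m+1} < 0$ and hence $g_{m+2} < g_m$, giving that $\{g_m : m \text{ even}\}$ strictly decreases; if $m$ is odd then $m+1$ is even, so $d_{m+1} > 0$ and $g_{m+2} > g_m$, giving that $\{g_m : m \text{ odd}\}$ strictly increases. This settles the first two bullets.

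For the last two bullets I would run a nested-interval argument on $I_k := [g_{2k-1}, g_{2k}]$. Each interval is nondegenerate because $d_{2k} > 0$ forces $g_{2k-1} < g_{2k}$, and by the monotonicity just proved the left endpoints increase while the right endpoints decrease, so $I_{k+1} \subset I_k$. Consequently, given any odd index $2j-1$ and any even index $2k$, put $N := \max(j,k)$; then
\[
g_{2j-1} \leq g_{2N-1} < g_{2N} \leq g_{2k},
\]
where the outer inequalities use the increase of the odd subsequence and the decrease of the even subsequence respectively, and the middle one is $g_{2N-1} < g_{2N}$. Thus every odd term lies strictly below every even term, so each even $g_n$ bounds the odd subsequence above and each odd $g_n$ bounds the even subsequence below, as claimed.

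I do not anticipate a genuine obstacle, since the theorem is a formal consequence of the alternating-sign and decreasing-magnitude properties. The only point needing care is the cross-comparison between \emph{arbitrary} odd and even indices in the final two bullets: neighbour inequalities alone do not compare, say, a large odd index with a small even one, and it is precisely the device $N = \max(j,k)$ in the nested-interval chain that makes the comparison hold uniformly.
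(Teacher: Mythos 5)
Your proof is correct and follows the same route the paper intends: it derives the theorem from exactly the two facts the paper establishes beforehand (the alternating signs of the differences from Lemma \ref{LemmaPropositionDifference} and the strict decrease of their magnitudes from the preceding Proposition), filling in the standard alternating-sequence bookkeeping that the paper leaves implicit. The nested-interval device with $N=\max(j,k)$ correctly handles the cross-comparison of arbitrary odd and even indices needed for the last two bullets.
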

Define two complex numbers 
$G_\mathrm{even}$ and $G_\mathrm{odd}$ by
\begin{eqnarray*}
i G_\mathrm{even} = 
\lim_{m \rightarrow \infty} i G_{2m}(i), \qquad
i G_\mathrm{odd}   =
\lim_{m \rightarrow \infty} i G_{2m+1}(i).
\end{eqnarray*}

\begin{proposition}\label{PropositionSC}
The following conditions are equivalent:
\begin{enumerate}
\item\label{ConditionEvenOdd}
$G_\mathrm{even} \neq G_\mathrm{odd}$;
\item\label{ConditionProd}
$\displaystyle{
\lim_{m \rightarrow \infty} \prod_{n = 1}^m
\left(1 - \frac{B_{n-1}}{B_n}\right) > 0
}$;
\item\label{ConditionSum}
$\displaystyle{
\lim_{m \rightarrow \infty}\sum_{n = 1}^m
\frac{B_{n-1}}{B_n} < \infty
}$.
\end{enumerate}
\end{proposition}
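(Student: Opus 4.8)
The plan is to route both equivalences through the sequence of gaps $d_m := |iG_m(i) - iG_{m-1}(i)|$, reducing condition \eqref{ConditionEvenOdd} first to a statement about $\lim_m d_m$ and then to the telescoping product in \eqref{ConditionProd}. First I would record, from Theorem \ref{TheoremSC}, that the even and odd subsequences of $\{iG_m(i)\}$ are monotone and mutually bounded, hence both converge and $iG_\mathrm{even} \geqq iG_\mathrm{odd}$; thus condition \eqref{ConditionEvenOdd} is precisely the strict inequality $iG_\mathrm{even} > iG_\mathrm{odd}$. Since each even term dominates its neighbouring odd terms, one has $d_{2k} = iG_{2k}(i) - iG_{2k-1}(i)$ and $d_{2k+1} = iG_{2k}(i) - iG_{2k+1}(i)$; letting $k \to \infty$ in either expression and using the two subsequential limits gives $\lim_m d_m = iG_\mathrm{even} - iG_\mathrm{odd}$. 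Consequently \eqref{ConditionEvenOdd} holds if and only if $\lim_m d_m > 0$.

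Next I would convert this into the product \eqref{ConditionProd}. The ratio formula proved just above (for $m \geqq 3$, $d_m / d_{m-1} = 1 - B_{m-1}/B_m$) telescopes to $d_m = d_2 \prod_{n=3}^m (1 - B_{n-1}/B_n)$, where $d_2 = \omega_1 \omega_2 / (B_1 B_2) > 0$ by Lemma \ref{LemmaPropositionDifference}. The recurrence $B_n = B_{n-1} + \omega_n B_{n-2}$ with $B_{-1} = B_0 = 1$ and $\omega_n > 0$ forces $0 < B_{n-1} < B_n$, so each factor $1 - B_{n-1}/B_n = \omega_n B_{n-2}/B_n$ lies in $(0,1)$ and the partial products are decreasing; in particular $\lim_m d_m$ exists. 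Because the factors for $n = 1, 2$ are fixed positive constants, the identity $\prod_{n=1}^m (1 - B_{n-1}/B_n) = (c/d_2)\, d_m$ with $c = (1 - B_0/B_1)(1 - B_1/B_2) > 0$ shows that $\lim_m d_m > 0$ is equivalent to $\lim_m \prod_{n=1}^m (1 - B_{n-1}/B_n) > 0$. This settles \eqref{ConditionEvenOdd} $\Leftrightarrow$ \eqref{ConditionProd}.

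Finally, the equivalence \eqref{ConditionProd} $\Leftrightarrow$ \eqref{ConditionSum} is the classical dichotomy for infinite products. Writing $a_n = B_{n-1}/B_n \in (0,1)$, the decreasing product $\prod (1 - a_n)$ has a strictly positive limit if and only if $\sum_n \bigl(-\log(1 - a_n)\bigr)$ converges, and the elementary two-sided estimate $a_n \leqq -\log(1 - a_n) \leqq a_n/(1 - a_n)$ reduces this to $\sum_n a_n < \infty$ (for the upper bound one uses that $\sum_n a_n < \infty$ forces $a_n \to 0$, whence $a_n/(1-a_n) \leqq 2 a_n$ eventually). The only genuinely delicate point in the whole argument is the first paragraph: correctly identifying $\lim_m d_m$ with the gap $iG_\mathrm{even} - iG_\mathrm{odd}$ and tracking which of two adjacent terms is larger so that the absolute values unwind with the right sign. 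Everything after that is a telescoping identity built from the already-established lemmas together with the standard product–series comparison, so I expect no further obstruction beyond careful bookkeeping of indices.
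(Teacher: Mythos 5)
Your proof is correct and follows essentially the same route as the paper: identify $|iG_m(i)-iG_{m-1}(i)|$ with the partial product $\prod_{n=1}^m(1-B_{n-1}/B_n)$ via the ratio formula, note that this gap tends to $iG_{\mathrm{even}}-iG_{\mathrm{odd}}$, and then use the standard product--series comparison for the equivalence with \eqref{ConditionSum}. The only difference is cosmetic: the paper computes $iG_2(i)-iG_1(i)=(1-B_0/B_1)(1-B_1/B_2)$ exactly so that the product starts at $n=1$, whereas you only use that this constant is positive, and you make explicit the limit identification and sign bookkeeping that the paper leaves implicit.
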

\begin{proof}
Since $B_0 = 1$, $B_1 = 1 + \omega_1$ and $B_2 = 1 + \omega_1 + \omega_2$, we have
\begin{eqnarray*}
i G_2 (i) - i G_1 (i)
&=& 
\frac{1}{1 + \omega_1/(1 + \omega_2)}
-
\frac{1}{1 + \omega_1}\\
&=& 
\frac{\omega_1}{1 + \omega_1}
\frac{\omega_2}{1 + \omega_1 + \omega_2}\\
&=& 
\left(1 - \frac{B_0}{B_1}\right)
\left(1 - \frac{B_1}{B_2}\right).
\end{eqnarray*}
Equivalence between (\ref{ConditionEvenOdd}) and (\ref{ConditionProd}) follows from the equality
\begin{eqnarray*}
| i G_{m} (i) - i G_{m -1}(i) |
= (i G_2 (i) - i G_1 (i)) \prod_{n = 3}^m
\left(1 - \frac{B_{n-1}}{B_n}\right)
=
\prod_{n = 1}^m
\left(1 - \frac{B_{n-1}}{B_n}\right).
\end{eqnarray*}
Equivalence between (\ref{ConditionProd}) and (\ref{ConditionSum}) is well-known.
\footnote{Use the inequality
$- 2x < \log(1 - x) < -x, \quad 0 < x \leqq 1/2$.} 
\end{proof}

\begin{definition}
If one of the equivalent conditions in Proposition \ref{PropositionSC} holds,
then the infinite Jacobi sequence 
$\{\omega_m \}_{m = 1}^\infty$ 
is said to have property (SC).
\footnote{Property (SC) means that the subsequences of the Stieltjes transform
`Separately Converge.'}
\end{definition}

\section{Two self-adjoint operators with different Stieltjes transforms}
\label{sectionselfadjoint}

We are going to prove that if property (SC) holds,
then 
\begin{itemize}
\item
The subsequences $\{X_{2m}\}_{m = 0}^\infty$ and 
$\{X_{2m + 1}\}_{m = 0}^\infty$
of operators respectively converge in some sense
(Proposition \ref{PropositionSCSC});
\item
They provide two self-adjoint operators;
\item
Their spectral decompositions give two probability measures which realize
$\{\omega_m\}_{m = 1}^\infty$ as the Jacobi sequence.
\end{itemize}

Let $\{C_m\}_{m = 1}^\infty$ denote the sequence defined by
\begin{eqnarray*}
C_0 &=& 1,\\
C_1 &=& \frac{1}{1 + \omega_1},\\
C_m &=&
\frac{1}{1 + 
\displaystyle{
\frac{\omega_m}{ 
\begin{array}{c}
\ddots\\
\displaystyle{1+ \frac{\omega_2}{1 + \omega_1}}
\end{array}}}
}
= 
\frac{1}{1} 
\begin{array}{cc}\\+\end{array}
\frac{\omega_m}{1} 
\begin{array}{cc}\\+\end{array}
\frac{\omega_{m-1}}{1} 
\begin{array}{cc}\\+\end{array}
\ldots 
\begin{array}{cc}\\+\end{array}
\frac{\omega_1}{1}. 
\end{eqnarray*}

We note that the sequence $C_m$ satisfies the equation 
\begin{eqnarray}\label{EquationC}
C_{m + 1} = \frac{1}{1 + \omega_{m + 1} C_m}.
\end{eqnarray}

The sequence $C_m$ is expressed by $\{B_m\}_{m = 1}^\infty$.
\begin{lemma}
For $m \geqq 1$,
$C_m = B_{m-1} / B_m$.
\end{lemma}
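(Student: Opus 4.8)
The plan is to establish $C_m = B_{m-1}/B_m$ by induction on $m$, driven by the recursion (\ref{EquationC}) for the sequence $\{C_m\}$ together with the defining recursion $B_m = B_{m-1} + \omega_m B_{m-2}$ for $\{B_m\}$. Before starting, I would record a positivity observation: since every $\omega_n$ is positive and $B_{-1} = B_0 = 1 > 0$, an immediate induction yields $B_m > 0$ for all $m$. This guarantees that the ratios below are well defined and that the continued fractions defining $C_m$ never collapse, so the identity makes sense term by term.

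For the base case I would take $m = 0$ and simply compare $C_0 = 1$ with $B_{-1}/B_0 = 1/1 = 1$. (As a sanity check one can also verify $m = 1$ directly, since $B_1 = B_0 + \omega_1 B_{-1} = 1 + \omega_1$ gives $B_0/B_1 = 1/(1 + \omega_1) = C_1$.) For the inductive step, assuming $C_m = B_{m-1}/B_m$, I would apply (\ref{EquationC}) and substitute the inductive hypothesis:
\[
C_{m+1} = \frac{1}{1 + \omega_{m+1} C_m}
= \frac{1}{1 + \omega_{m+1} B_{m-1}/B_m}
= \frac{B_m}{B_m + \omega_{m+1} B_{m-1}}
= \frac{B_m}{B_{m+1}},
\]
where the final equality is exactly the defining recursion $B_{m+1} = B_m + \omega_{m+1} B_{m-1}$. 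This closes the induction and proves the lemma.

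I do not expect any genuine obstacle here: the statement is a clean transcription of the Wallis-type recursion relating consecutive convergents of a continued fraction to the denominator sequence $\{B_m\}$. The only point demanding a moment's care is ensuring that no denominator vanishes when the inductive hypothesis is plugged into (\ref{EquationC}), which is precisely why I would dispatch the positivity of the $B_m$ at the very start.
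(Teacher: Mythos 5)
Your proof is correct and follows essentially the same route as the paper: an induction on $m$ using the recursion $C_{m+1} = 1/(1+\omega_{m+1}C_m)$ together with $B_{m+1} = B_m + \omega_{m+1}B_{m-1}$. The only differences are cosmetic — you anchor the induction at $m=0$ rather than $m=1$ and make the positivity of the $B_m$ explicit, which the paper leaves implicit.
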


As a consequence, 
property (SC) is equivalent to 
$\sum_{m = 1}^\infty C_m < \infty$.

\begin{proof}
We are going to prove by induction.
For the case that $m = 1$, 
we have
$\displaystyle{C_1 = \frac{1}{1 + \omega_1} = \frac{B_0}{B_1}}$.
Suppose that the lemma holds for $m$.
Then we have
\begin{eqnarray*}
C_{m + 1} 
= 
\frac{1}{1 + \omega_{m + 1}C_m}
= 
\frac{1}{1 + \omega_{m + 1} B_{m - 1} / B_m} 
= 
\frac{B_m}{B_m + \omega_{m + 1} B_{m - 1}} 
=
\frac{B_m}{B_{m + 1}}
\end{eqnarray*}
\end{proof}

We define $D^{(m)}_n$ by 
$\displaystyle{
\frac{\omega_{n + 1}}{1}
\begin{array}{ccc}
   &           &  \\ 
+ & ... & + 
\end{array}
\frac{\omega_m}{1}}
$.
For $n = 0, 1, \cdots, m-1$,
we have 
\begin{eqnarray}\label{EquationD}
D^{(m)}_n = \frac{\omega_{n + 1}}{1 + D^{(m)}_{n + 1}}.
\end{eqnarray}

\begin{lemma}\label{LemmaSeqD}
Let $n$ be an arbitrary natural number. 
Fix $n$.
Then $\{D^{(2m)}_n ; 2m > n\}$
and
$\{D^{(2m + 1)}_n ; 2m + 1 > n\}$ respectively converge to positive real numbers.
\end{lemma}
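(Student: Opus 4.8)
The plan is to show that each of the two subsequences is monotone and trapped between two fixed positive constants, whence the monotone convergence theorem yields convergence to a positive limit. The useful device is to read the recurrence (\ref{EquationD}) as an iteration of the strictly decreasing maps $f_k(x) = \omega_k/(1 + x)$ on $[0, \infty)$, together with the terminal convention $D^{(m)}_m = 0$, so that $D^{(m)}_n = (f_{n+1} \circ f_{n+2} \circ \cdots \circ f_m)(0)$ for $m > n$.

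First I would record two-sided bounds valid for every $m > n$. From (\ref{EquationD}) and $D^{(m)}_{n+1} \geqq 0$ we get $D^{(m)}_n \leqq \omega_{n+1}$; and since $D^{(m)}_{n+1} \leqq \omega_{n+2}$ by the same upper bound applied one index further in (the terminal value $D^{(m)}_{n+1} = 0$ being covered as well), we get $D^{(m)}_n \geqq \omega_{n+1}/(1 + \omega_{n+2})$. Hence
\[
\frac{\omega_{n+1}}{1 + \omega_{n+2}} \leqq D^{(m)}_n \leqq \omega_{n+1} \qquad (m > n),
\]
so both subsequences are bounded and any subsequential limit is at least $\omega_{n+1}/(1 + \omega_{n+2}) > 0$.

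Next I would prove monotonicity of the fixed-parity subsequences by comparing $D^{(m)}_n$ with $D^{(m+2)}_n$. Setting $h = f_{n+1} \circ \cdots \circ f_m$, one has $D^{(m)}_n = h(0)$ and $D^{(m+2)}_n = h\bigl(D^{(m+2)}_m\bigr)$, where $D^{(m+2)}_m = \omega_{m+1}/(1 + \omega_{m+2}) > 0$. Since $h$ is a composite of $m - n$ strictly decreasing maps, it is increasing when $m - n$ is even and decreasing when $m - n$ is odd; as its two arguments are $D^{(m+2)}_m > 0$ and $0$, this gives $D^{(m+2)}_n > D^{(m)}_n$ when $m - n$ is even and $D^{(m+2)}_n < D^{(m)}_n$ when $m - n$ is odd. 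Along $\{D^{(2m)}_n\}$ the parity of $m - n$ is constant and equal to the parity of $n$, and along $\{D^{(2m+1)}_n\}$ it is the opposite; hence each subsequence is monotone, one increasing and one decreasing according to the parity of $n$. Being monotone and bounded, each converges, and the lower bound above makes each limit positive.

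The hard part will be the monotonicity bookkeeping: the direction of a fixed-parity subsequence flips with the parity of $n$, so one must carefully track how the parity of the number $m - n$ of composed decreasing maps interacts with the fixed parity class of the upper index. As a cross-check, the case $n = 0$ can be handled independently, since $i G_m(i) = 1/(1 + D^{(m)}_0)$ makes the convergence of $\{D^{(2m)}_0\}$ and $\{D^{(2m+1)}_0\}$ equivalent to that of $\{i G_{2m}(i)\}$ and $\{i G_{2m+1}(i)\}$ already established after Theorem \ref{TheoremSC}, the limits lying in $(0,1)$; the general case then also follows by applying the $n = 0$ statement to the shifted Jacobi sequence $\{\omega_{n+k}\}_{k \geqq 1}$.
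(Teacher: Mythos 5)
Your proof is correct, but it takes a genuinely different route from the paper's. The paper proves the lemma by induction on $n$: the base case $n=0$ is deduced from the already-established alternating behaviour of $iG_m(i)$ (Theorem \ref{TheoremSC}) via $D^{(m)}_0 = 1/(iG_m(i)) - 1$, and the inductive step propagates convergence upward through $D^{(m)}_{n+1} = \omega_{n+1}/D^{(m)}_n - 1$, using the same two-sided bounds $D^{(m)}_n \in [\omega_{n+1}/(1+\omega_{n+2}),\, \omega_{n+1}]$ that you derive to keep the denominators away from $0$ and the limits positive. You instead fix $n$ and prove convergence directly, by viewing $D^{(m)}_n$ as $(f_{n+1}\circ\cdots\circ f_m)(0)$ with $f_k(x)=\omega_k/(1+x)$ strictly decreasing, and comparing $D^{(m)}_n$ with $D^{(m+2)}_n = h(D^{(m+2)}_m)$ where $h$ is the same composite evaluated at a positive point; the parity count of decreasing maps then makes each fixed-parity subsequence monotone, and boundedness finishes the argument. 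Your approach is self-contained (it does not lean on Theorem \ref{TheoremSC} except as an optional cross-check) and yields strictly more, namely the monotonicity of each subsequence together with the observation that the direction of monotonicity flips with the parity of $n$ --- which is exactly the bookkeeping the paper's induction silently avoids having to do. The paper's route is shorter given the machinery already in place and makes the link to the Stieltjes transform explicit at every level. Both arguments are sound; your shift-of-sequence remark at the end is essentially a repackaging of the paper's induction, and it does work because shifting by $n$ permutes the two parity classes of upper indices as a pair.
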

\begin{proof}
We first prove the lemma in the case of $n = 0$.
Since $i G_m(i)$ is equal to
$1 / (1 + D^{(m)}_0)$,
we have $D^{(m)}_0 = 1 /(i G_m(i)) - 1$.
From the fluctuation phenomenon, $i G_m(i)$ is in the closed interval 
\[
[i G_1(i), i G_2(i)]
=
\left[
\frac{1}{1}
\begin{array}{c}\\+\end{array}
\frac{\omega_1}{1}
, 
\frac{1}{1}
\begin{array}{c}\\+\end{array}
\frac{\omega_1}{1}
\begin{array}{c}\\+\end{array}
\frac{\omega_2}{1}
\right].\]
It follows that
$D^{(m)}_0$ is in the closed interval
$\displaystyle{\left[
\frac{\omega_1}{1 + \omega_2}
, 
\omega_1
\right]}$.
By Theorem \ref{TheoremSC},
the sequences $\{D^{(m)}_0 ; m \mathrm{\ is\ even}\}$
and
$\{D^{(m)}_0 ; m \mathrm{\ is\ odd}\}$ converge to real numbers.

Suppose that
the lemma holds for a natural number $n$.
The sequence $\{D^{(m)}_{n + 1}\}_{m = n + 2}^\infty$
is described as 
$D^{(m)}_{n + 1} = \omega_{n + 1}/D^{(m)}_n - 1$.
Since 
\[\displaystyle{D^{(m)}_n =
\frac{\omega_{n + 1}}{1}  
\begin{array}{c} \\ + \end{array}
\frac{\omega_{n + 2}}{1}  
\begin{array}{ccc}
  &         & \\ 
+&\cdots&+
\end{array}
\frac{\omega_{m}}{1}  
}\]
is in the closed interval
$\displaystyle{\left[
\frac{\omega_{n + 1}}{1}
\begin{array}{c}\\+\end{array}
\frac{\omega_{n + 2}}{1}
, 
\frac{\omega_{n + 1}}{1}
\begin{array}{c}\\+\end{array}
\frac{\omega_{n + 2}}{1}
\begin{array}{c}\\+\end{array}
\frac{\omega_{n + 3}}{1}
\right]}$,
we have 
\[D^{(m)}_{n + 1} = 
\frac{\omega_{n + 1}}{D^{(m)}_n} - 1\in
\left[
\frac{\omega_{n + 2}}{1 + \omega_{n + 3}}
, \omega_{n + 2}\right].\]
By the hypothesis of induction, the above lemma for $n + 1$ follows.
\end{proof}

We denote by $Y^{(m)}$ the matrix
$(i - X_m)^{-1} \in \mathbb{B}(\ell_2(\{0, 1, \cdots, m\}))$.
We decompose $Y^{(m)}$ into column vectors as
$Y^{(m)} = [Y^{(m)}_0 Y^{(m)}_1 \cdots Y^{(m)}_m]$.
We describe the vector $Y^{(m)}_0$ as
$\displaystyle{Y^{(m)}_0 = 
\left(
\begin{array}{c}
\,^0Y^{(m)}_0\\
\,^1Y^{(m)}_0\\
\vdots\\
\,^m Y^{(m)}_0
\end{array}
\right)
}$.
The vector $Y^{(m)}_0$ is described by $C_n$ and $D^{(m)}_n$ as follows.

\begin{lemma}\label{Lemma0Y}
\begin{eqnarray*}
\,^0Y^{(m)}_0 &=&
\displaystyle{
\frac{1}{i}} 
\frac{1}{
1 + D_0^{(m)}},\\
\,^n Y^{(m)}_0 &=& 
\displaystyle{
\frac{\sqrt{\omega_1 \cdots \omega_n} C_1 \cdots C_{n}}
{i^{n + 1}}}
\frac{1}{
1 +
C_n D_n^{(m)}}, \quad 1 \leqq n \leqq m -1,\\
\,^m Y^{(m)}_0 &=& 
\displaystyle{
\frac{\sqrt{\omega_1 \cdots \omega_m} C_1 \cdots C_{m}}
{i^{m + 1}}}.
\end{eqnarray*}
\end{lemma}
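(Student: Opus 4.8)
The plan is to obtain the three formulas by solving the linear system $(i - X_m)Y^{(m)}_0 = \delta_0$ directly and matching the two recursions that emerge to the defining relations (\ref{EquationC}) for $C_n$ and (\ref{EquationD}) for $D^{(m)}_n$. Abbreviating $v_n := \,^{n}Y^{(m)}_0$, the tridiagonal structure of $X_m$ makes the system read $i v_0 - \sqrt{\omega_1}\,v_1 = 1$, then $-\sqrt{\omega_n}\,v_{n-1} + i v_n - \sqrt{\omega_{n+1}}\,v_{n+1} = 0$ for $1 \leqq n \leqq m-1$, and finally $-\sqrt{\omega_m}\,v_{m-1} + i v_m = 0$. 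The key idea is that the proposed answer separates into a factor built ``from the top'' (the product $C_1\cdots C_n$) and a factor built ``from the bottom'' (the quantity $D^{(m)}_n$), so I would recover these by a forward elimination and a backward elimination respectively.

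First I would run a forward sweep (Gaussian elimination from the top), reducing row $n$ to the shape $p_n v_n - \sqrt{\omega_{n+1}}\,v_{n+1} = q_n$ with $p_0 = i$, $q_0 = 1$ and $p_n = i - \omega_n/p_{n-1}$, $q_n = \sqrt{\omega_n}\,q_{n-1}/p_{n-1}$. Writing $p_n = i/C_n$ turns the pivot recursion into $1/C_n = 1 + \omega_n C_{n-1}$, which is exactly (\ref{EquationC}); hence $p_n = i/C_n$ and, iterating the formula for $q_n$, \[ q_n = \frac{\sqrt{\omega_1\cdots\omega_n}\,C_1\cdots C_{n-1}}{i^{n}}. \] The terminal pivot equation $p_m v_m = q_m$ (no $v_{m+1}$ term) then gives $v_m = q_m C_m/i = \sqrt{\omega_1\cdots\omega_m}\,C_1\cdots C_m / i^{m+1}$, which is the stated bottom entry, carrying no $D$-factor as required.

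Next I would run a backward sweep to couple in the $D^{(m)}_n$. Eliminating upward expresses each coordinate through its predecessor, $v_{n+1} = t_{n+1} v_n$, where $t_m = \sqrt{\omega_m}/i$ and $t_n = \sqrt{\omega_n}/(i - \sqrt{\omega_{n+1}}\,t_{n+1})$; this uses only the homogeneous rows $n+1,\dots,m$, so it is compatible with the forward reduction. Setting $e_n := \sqrt{\omega_{n+1}}\,t_{n+1}$ gives $e_{m-1} = \omega_m/i$ and $e_n = \omega_{n+1}/(i - e_{n+1})$, and using $1/i = -i$ one checks that $e_n = D^{(m)}_n/i$ solves this, matching (\ref{EquationD}). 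Substituting $v_{n+1} = t_{n+1}v_n$ into the reduced row collapses it to $(i/C_n + i D^{(m)}_n)\,v_n = q_n$, i.e. $v_n = q_n C_n/\bigl(i(1 + C_n D^{(m)}_n)\bigr)$, which is precisely the middle formula; the case $n = 0$, where $C_0 = 1$ and $q_0 = 1$, yields the top entry and recovers $iG_m(i) = 1/(1 + D^{(m)}_0)$ in agreement with Lemma \ref{LemmaFraction}.

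I expect the main obstacle to be bookkeeping rather than anything conceptual: one must keep the powers of $i$ and the repeated identity $1/i = -i$ under control, and correctly align the two continued fractions — $C_n$ assembled upward and $D^{(m)}_n$ assembled downward — with their recursions (\ref{EquationC}) and (\ref{EquationD}) across the three boundary regimes $n = 0$, $1 \leqq n \leqq m-1$, and $n = m$. An alternative is to bypass elimination and verify by induction that the three displayed expressions satisfy the three families of equations directly; however, the forward/backward sweep is preferable because it makes the factor $1/(1 + C_n D^{(m)}_n)$ appear transparently as the coupling, at index $n$, of the ``top'' and ``bottom'' continued fractions.
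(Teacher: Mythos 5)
Your proposal is correct, but it proceeds in the opposite direction from the paper. The paper's proof is purely verificational: it \emph{defines} a vector $\widetilde{Y}$ by the claimed right-hand sides and checks, entry by entry, that $(i - X_m)\widetilde{Y} = \delta_0$, using equations (\ref{EquationC}) and (\ref{EquationD}) to collapse each row; invertibility of $i - X_m$ then forces $\widetilde{Y} = Y^{(m)}_0$. This is exactly the ``alternative'' you mention and set aside at the end. Your two-sided elimination instead \emph{derives} the formula: the forward sweep produces the pivots $p_n = i/C_n$ and inhomogeneities $q_n = \sqrt{\omega_1\cdots\omega_n}\,C_1\cdots C_{n-1}/i^n$ (the recursion $1/C_n = 1 + \omega_n C_{n-1}$ is precisely (\ref{EquationC})), the backward sweep on the homogeneous rows $n+1,\dots,m$ produces $e_n = D^{(m)}_n/i$ via (\ref{EquationD}), and the coupling $(i/C_n + iD^{(m)}_n)v_n = q_n$ yields the middle formula, with the two boundary rows giving the $n=0$ and $n=m$ cases; I checked the index bookkeeping and the signs coming from $1/i = -i$, and they are consistent. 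Both arguments rest on the same two recursions and on the invertibility of $i - X_m$ (all pivots $i/C_n$ and all $i(1+D^{(m)}_{n+1})$ are nonzero because $C_n, D^{(m)}_n > 0$, so the eliminations never divide by zero). What your route buys is an explanation of \emph{why} the answer has the shape it does — the product $C_1\cdots C_n$ assembled from the top meeting the tail $D^{(m)}_n$ assembled from the bottom through the factor $1/(1 + C_nD^{(m)}_n)$ — at the cost of slightly more scaffolding ($p_n$, $q_n$, $t_n$, $e_n$) than the paper's direct substitution, which gets by with a single telescoping computation per row.
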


\begin{proof}
We define a vector $\displaystyle{\widetilde{Y} = 
\left(
\begin{array}{c}
\,^0\widetilde{Y}\\
\vdots\\
\,^m\widetilde{Y}
\end{array}
\right)
}$
by the right hand side of the equations which we are going to prove. By the injectivity of $(i - X_m)^{-1}$, it suffices to show that $\displaystyle{
\left(
\begin{array}{c}
\,^0Z\\
\vdots\\
\,^m Z
\end{array}
\right)
=
(i - X_m) \widetilde{Y}
}$ is equal to $\delta_0 = 
\left(
\begin{array}{c}
1\\
0\\
\vdots
\end{array}
\right)
$.
Equation (\ref{EquationC}) and (\ref{EquationD}) yield
\begin{eqnarray*}
\,^0Z &=&
i \left( \,^0 \widetilde{Y} \right) 
- \sqrt{\omega_1} \left( \,^1\widetilde{Y} \right)\\
&=&
\frac{1}{1 + D^{(m)}_0} + \frac{\omega_1 C_1}{1 + C_1 D^{(m)}_1}\\
&=&
\frac{1 + D^{(m)}_1}{1 + D^{(m)}_1 + \omega_{1}} 
+ \frac{\omega_1}{C_1^{-1} + D^{(m)}_1}\\
&=& 1.
\end{eqnarray*}
In order to prove  $^nZ = 0, 1 \leqq n \leqq m - 1$, we compute
\begin{eqnarray*}
&&- \sqrt{\omega_n} (\,^{n -1} \widetilde{Y})
+ i (\,^n\widetilde{Y})\\
&=&
\frac{\sqrt{\omega_1 \cdots \omega_n} C_1 \cdots C_{n-1}}{i^{n + 2}}
\left(
\frac{1}{1 + C_{n-1}D^{(m)}_{n-1}} 
-
\frac{C_n}{1 + C_{n}D^{(m)}_{n}} 
\right).
\end{eqnarray*}
By $D_{n -1}^{(m)} = \omega_n / (1 + D_n^{(m)})$
(equation (\ref{EquationD})) and 
$1 + \omega_n C_{n-1} = C_n^{-1}$
(equation (\ref{EquationC})),
we have
\begin{eqnarray*}
\frac{1}{1 + C_{n-1}D^{(m)}_{n-1}} 
-
\frac{C_n}{1 + C_{n}D^{(m)}_{n}} 
&=&
\frac{1 + D^{(m)}_n}{1 + \omega_{n} C_{n-1} + D^{(m)}_n} 
-
\frac{1}{C_{n}^{-1} + D^{(m)}_{n}} \\
&=&
\frac{D^{(m)}_n}{C_{n}^{-1} + D^{(m)}_n} \\
&=&
C_n \frac{1}{1 / D^{(m)}_{n} + C_n} \\
&=&
\omega_{n + 1} C_n 
\frac{1}{1 + \omega_{n + 1} C_{n} + D^{(m)}_{n + 1}}.
\end{eqnarray*}
Equation (\ref{EquationC})
for $n + 1$ yields
\begin{eqnarray*}
\frac{1}{1 + \omega_{n + 1} C_{n} + D^{(m)}_{n + 1}}
=
\frac{1}{1 /C_{n + 1} + D^{(m)}_{n + 1}}
=
\frac{C_{n + 1}}{1 + C_{n + 1} D^{(m)}_{n + 1}}.
\end{eqnarray*}
It follows that
\begin{eqnarray*}
^nZ &=&
- \sqrt{\omega_n} \left(\,^{n -1}\widetilde{Y}\right)
+ i \left(\,^n\widetilde{Y}\right)
- \sqrt{\omega_{n + 1}} \left(\,^{n + 1}\widetilde{Y}\right)\\
&=&
\frac{\sqrt{\omega_1 \cdots \omega_n} C_1 \cdots C_{n-1}}{i^{n + 2}}
\omega_{n + 1} C_n \frac{C_{n + 1}}{1 + C_{n + 1} D^{(m)}_{n + 1}}
- \sqrt{\omega_{n + 1}} \left(\,^{n + 1}\widetilde{Y}\right)\\
&=& 0.
\end{eqnarray*}
Since $D_{m - 1}^{(m)}$ is equal to $\omega_m$, 
we compute $^m Z$ as follows:
\begin{eqnarray*}
^m Z
&=&
-\sqrt{\omega_m} \left(\,^{m -1}\widetilde{Y}\right) 
+ i \left(\,^m \widetilde{Y}\right)\\
&=&
\frac{\sqrt{\omega_1 \cdots \omega_m} C_1 \cdots C_{m - 1}}
{i^{m}}
\left(
\frac{-1}{
1 +
C_{m - 1} D_{m - 1}^{(m)}}
+
C_m
\right)\\
&=& 0
\end{eqnarray*}
It follows that $(i - X_m) \widetilde{Y}$ is equal to $\delta_0$.
\end{proof}

We regard $Y^{(m)} = (i - X_m)^{-1} \in 
\mathbb{B}(\ell_2(\{0, 1, \cdots, m\}))$ as an operator
in $\mathbb{B}(\ell_2(\mathbb{N}))$.

\begin{lemma}\label{LemmaWeakConvergence}
The sequences 
$\{Y^{(2m)}\}_{m = 0}^\infty$
and
$\{Y^{(2m + 1)}\}_{m = 0}^\infty$
converge in the weak operator topology.
\end{lemma}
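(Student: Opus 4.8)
The plan is to deduce weak-operator convergence from two ingredients: a uniform norm bound on the operators $Y^{(m)}$ together with convergence of every matrix coefficient $^l Y^{(m)}_k = \langle Y^{(m)} \delta_k, \delta_l \rangle$ along the even and along the odd indices. First I would record the uniform bound. Each $X_m$ is a real symmetric matrix, hence self-adjoint with real spectrum, so $\|Y^{(m)}\| = \|(i - X_m)^{-1}\| = (\min_{\lambda \in \mathrm{spec}(X_m)} \sqrt{1 + \lambda^2})^{-1} \leqq 1$; regarding $Y^{(m)}$ as an operator on $\ell_2(\mathbb{N})$ that vanishes off $\ell_2(\{0, \dots, m\})$ leaves its norm unchanged, so $\sup_m \|Y^{(m)}\| \leqq 1$.

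Next I would show that each coefficient $^l Y^{(m)}_k$ converges as $m \to \infty$ through the even integers and through the odd integers. Since $i - X_m$ is complex symmetric (its transpose equals $i - X_m^{T} = i - X_m$ because $X_m$ is real symmetric), its inverse $Y^{(m)}$ is complex symmetric as well, so $^l Y^{(m)}_k = {}^k Y^{(m)}_l$ and it suffices to treat $k \leqq l$. I would compute the column $Y^{(m)}_k = (i - X_m)^{-1} \delta_k$ by exactly the continued-fraction ansatz of Lemma \ref{Lemma0Y}: solving the three-term system $(i - X_m) v = \delta_k$, the entries with index below $k$ are governed by the head continued fractions $C_1, \dots, C_k$, while the entries with index at or above $k$ are governed by the tail continued fractions $D^{(m)}_k, D^{(m)}_{k+1}, \dots$, the two pieces being matched at row $k$ through the Wronskian-type identity underlying Lemma \ref{LemmaCommutationRelation}. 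The essential structural outcome is that $^l Y^{(m)}_k$ is a finite expression in which the truncation level $m$ enters \emph{only} through the tails $D^{(m)}_n$ with $n \geqq k$ and through denominators of the form $1 + C_n D^{(m)}_n$, whereas the head factors $C_1, \dots, C_l$ depend on $\omega_1, \dots, \omega_l$ alone and are independent of $m$.

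The convergence then follows from Lemma \ref{LemmaSeqD}: for each fixed $n$ the tails $D^{(m)}_n$ converge to positive limits along even $m$ and along odd $m$, and since $C_n > 0$ the denominators $1 + C_n D^{(m)}_n$ converge to limits exceeding $1$, hence stay bounded away from zero so no division-by-zero arises in the limit. Therefore each $^l Y^{(m)}_k$ converges along the even indices and along the odd indices. Finally I would upgrade entrywise convergence to weak-operator convergence by the standard $\varepsilon/3$ argument: given $\xi, \eta \in \ell_2(\mathbb{N})$, approximate them in norm by finitely supported vectors $\xi', \eta'$; using $\|Y^{(m)}\| \leqq 1$ the difference $|\langle Y^{(m)} \xi, \eta \rangle - \langle Y^{(m)} \xi', \eta' \rangle|$ is controlled uniformly in $m$, while $\langle Y^{(m)} \xi', \eta' \rangle$ is a finite linear combination of the coefficients $^l Y^{(m)}_k$ and is therefore Cauchy along the even (respectively odd) indices. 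Thus $\langle Y^{(m)} \xi, \eta \rangle$ is Cauchy along each subsequence, and the two limiting bounded sesquilinear forms represent the two weak-operator limits.

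The main obstacle is the middle step: producing the closed form of the general column $Y^{(m)}_k$ and, above all, verifying that its dependence on the truncation level enters only through the tails $D^{(m)}_n$ while the head factors $C_n$ remain $m$-independent, so that Lemma \ref{LemmaSeqD} applies without modification. The bookkeeping of the matching at row $k$ and the care needed to keep every denominator bounded away from zero are where the real work lies; once the formula is seen to have the expected shape, the uniform bound and the density argument make the rest routine.
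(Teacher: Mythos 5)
Your outer architecture --- uniform bound $\|Y^{(m)}\|\leqq 1$ from self-adjointness of $X_m$, entrywise convergence along each parity, then the $\varepsilon/3$ density argument to pass from matrix coefficients to the weak operator topology --- is correct and is exactly the skeleton of the paper's proof. The gap is in the middle step, and you have named it yourself: you never actually produce the closed form of the general column $Y^{(m)}_k$ for $k\geqq 1$, nor verify that the truncation level enters only through the tails $D^{(m)}_n$ and through denominators bounded away from zero. The Green's-function structure you describe (left solution depending on $\omega_1,\dots$ only, right solution carrying the $m$-dependence, matched by a Wronskian) is plausible and very likely completable, but as written it is an announced plan rather than an argument, and it is precisely the part that would require careful bookkeeping analogous to, and heavier than, the proof of Lemma \ref{Lemma0Y}.

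What you are missing is that this computation is unnecessary. From $Y^{(m)}(i-X_m)=\mathrm{id}$, reading off the $(k-1)$-st column gives $-\sqrt{\omega_{k-1}}\,Y^{(m)}_{k-2}+i\,Y^{(m)}_{k-1}-\sqrt{\omega_k}\,Y^{(m)}_k=\delta_{k-1}$, hence $Y^{(m)}_k=\bigl(i\,Y^{(m)}_{k-1}-\sqrt{\omega_{k-1}}\,Y^{(m)}_{k-2}-\delta_{k-1}\bigr)/\sqrt{\omega_k}$ (and $Y^{(m)}_1=(i\,Y^{(m)}_0-\delta_0)/\sqrt{\omega_1}$). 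So once the $0$-th column converges entrywise along each parity --- which is exactly what Lemma \ref{Lemma0Y} together with Lemma \ref{LemmaSeqD} delivers, and which you do establish --- every subsequent column converges entrywise by induction on $k$, with no further continued-fraction analysis. This is the route the paper takes. Replacing your middle step by this induction closes the gap; everything else in your write-up stands.
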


Recall that $\,^l Y^{(m)}_k$ stands for 
the $(l, k)$-entry of the matrix 
$Y^{(m)}$.

\begin{proof}
Note that the operator norm of $Y^{(m)}$ is at most $1$,
because $X_m$ is self-adjoint and $Y^{(m)}$ is the resolvent at $i$.
It suffices to show that
for arbitrary natural numbers $k, l \in \mathbb{N}$,
the sequences $\{\,^l Y^{(2m)}_k\}_{m = 0}^\infty$ and 
$\{\,^l Y^{(2m + 1)}_k\}_{m = 0}^\infty$ converge.
We prove this claim by induction on $k$.

By the concrete expressions of $\,^l Y_0^{(2m)}$ and
$\,^l Y_0^{(2m + 1)}$ in Lemma \ref{Lemma0Y} and by 
Lemma \ref{LemmaSeqD},
$\{\,^l Y_0^{(2m)}\}_m$ and
$\{\,^l Y_0^{(2m + 1)}\}_m$ respectively converge.
By the equation $Y^{(m)} (i - X_m) = 
\mathrm{id}_{\ell_2(\{0, 1, \cdots, m\})}$,
we have $i Y^{(m)}_0 - \sqrt{\omega_1} Y^{(m)}_1 
= \delta_0$.
Therefore the column vector $Y^{(m)}_1$ is equal to 
$(i Y^{(m)}_0 - \delta_0)/ \sqrt{\omega_1}$ 
and their entries converge.

Assume that the above claim holds for
the column vectors 
$Y^{(m)}_{k-2}$ and $Y^{(m)}_{k-1}$.
For the case of $2 \leqq k < m$,
observing the $(k -1)$-st column of the equation $Y^{(m)} (i - X_m) = 
\mathrm{id}_{\ell_2(\{0, 1, \cdots, m\})}$,
we have 
\begin{eqnarray*}
- \sqrt{\omega_{k-1}} Y^{(m)}_{k-2}
+ i Y^{(m)}_{k-1}
- \sqrt{\omega_{k}} Y^{(m)}_k &=& \delta_{k -1},
\\
Y^{(m)}_k
&=& 
\frac{- \sqrt{\omega_{k-1}} Y^{(m)}_{k-2}
+ i Y^{(m)}_{k-1} - \delta_{k -1}}
{\sqrt{\omega_{k}}}.
\end{eqnarray*}
By the assumption, $\{\,^{k}Y^{(2m)}_l\}_m$
and $\{\,^{k}Y^{(2m + 1)}_l\}_m$
respectively converge.
\end{proof}

In the case that property (SC) holds, we obtain a stronger conclusion in Proposition \ref{PropositionSCSC}.
Define an infinite sequence $\widehat{Y} =
\left(
\begin{array}{c}
\,^0\widehat{Y} \\
\,^1\widehat{Y} \\
\vdots
\end{array}
\right)
$
by
\[
\,^0\widehat{Y} = 1, 
\,^1\widehat{Y} = \sqrt{\omega_1}, 
\,^m\widehat{Y} = 
\sqrt{\omega_1 \cdots \omega_m} C_1 \cdots C_{m}.
\]
\begin{lemma}
The sequence $\widehat{Y}$ is an element of $\ell_2(\mathbb{N})$,
if property (SC) holds.
\end{lemma}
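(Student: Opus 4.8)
The plan is to reduce the square-summability of $\widehat{Y}$ to the condition $\sum_m C_m < \infty$, which is exactly property (SC); the whole computation hinges on a telescoping identity for the products $C_1 \cdots C_m$. First I would invoke the identity $C_m = B_{m-1}/B_m$ (together with $B_0 = 1$) to collapse the product: the factors cancel in pairs, so $C_1 C_2 \cdots C_m = \frac{B_0}{B_1}\frac{B_1}{B_2}\cdots\frac{B_{m-1}}{B_m} = \frac{1}{B_m}$. Substituting into the definition of $\widehat{Y}$ gives, for $m \geq 1$,
\[
|\,^m \widehat{Y}|^2 = \omega_1 \cdots \omega_m\, (C_1 \cdots C_m)^2 = \frac{\omega_1 \cdots \omega_m}{B_m^2}.
\]

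Next I would split this quantity into two factors whose behaviour is already controlled by the earlier results:
\[
\frac{\omega_1 \cdots \omega_m}{B_m^2} = \frac{\omega_1 \cdots \omega_m}{B_{m-1} B_m} \cdot \frac{B_{m-1}}{B_m}.
\]
The second factor is exactly $C_m$. For the first factor, Lemma \ref{LemmaPropositionDifference} identifies $\frac{\omega_1 \cdots \omega_m}{B_{m-1}B_m}$ with $|i G_m(i) - i G_{m-1}(i)|$, and the computation in the proof of Proposition \ref{PropositionSC} rewrites this difference as $\prod_{n=1}^m (1 - B_{n-1}/B_n)$. Since $B_n = B_{n-1} + \omega_n B_{n-2} > B_{n-1} > 0$, every factor $1 - B_{n-1}/B_n$ lies in $(0,1)$, so the product is bounded above by $1$ for every $m$. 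Hence $|\,^m\widehat{Y}|^2 \leq C_m$ for all $m \geq 1$.

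Finally I would sum over $m$. The zeroth term contributes the constant $|\,^0\widehat{Y}|^2 = 1$, and the tail is dominated by $\sum_{m \geq 1} C_m$, which is finite precisely because property (SC) holds (it is equivalent to $\sum_m C_m < \infty$, as recorded just after the identity $C_m = B_{m-1}/B_m$). Therefore $\sum_{m=0}^\infty |\,^m\widehat{Y}|^2 \leq 1 + \sum_{m=1}^\infty C_m < \infty$, so $\widehat{Y} \in \ell_2(\mathbb{N})$.

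I do not expect a genuine obstacle; the argument is a short chain of substitutions. The one point requiring care is to control the ``large'' factor $\omega_1 \cdots \omega_m / (B_{m-1}B_m)$ without circularity: it is tempting to bound it through the product condition of Proposition \ref{PropositionSC}, but its boundedness by $1$ is in fact automatic and holds for \emph{every} Jacobi sequence, so the only place where property (SC) is actually used is the final summation $\sum_m C_m < \infty$. Keeping that separation clean is the crux of the proof.
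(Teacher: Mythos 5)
Your argument is correct and lands on exactly the same pointwise estimate as the paper's, namely $(\,^m\widehat Y)^2 \leq \bigl(\prod_{n=1}^m(1-C_n)\bigr)C_m \leq C_m$, followed by the same conclusion from $\sum_m C_m<\infty$; only the derivation of that estimate differs. The paper pairs consecutive factors and uses the recursion (\ref{EquationC}) to get $\omega_{n+1}C_nC_{n+1}=\omega_{n+1}C_n/(1+\omega_{n+1}C_n)\leq 1$, which is precisely $1-C_{n+1}$, so its product $\omega_1C_1\prod_{n=1}^{m-1}\omega_{n+1}C_nC_{n+1}$ equals $\prod_{n=1}^m(1-C_n)$. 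You instead telescope $C_1\cdots C_m=1/B_m$ and identify $\omega_1\cdots\omega_m/(B_{m-1}B_m)$ with $\prod_{n=1}^m(1-B_{n-1}/B_n)$ via Lemma \ref{LemmaPropositionDifference} and the displayed equality in the proof of Proposition \ref{PropositionSC}; since $1-B_{n-1}/B_n=1-C_n$, this is the same quantity written in the $B$-variables. So the two routes are equivalent in substance: yours buys a clean separation between the ``automatically bounded'' factor and the summable factor $C_m$ at the cost of importing two earlier statements, while the paper's is self-contained from the $C$-recursion. One cosmetic caveat: with the paper's literal definition $\,^1\widehat Y=\sqrt{\omega_1}$ (no factor $C_1$), the $m=1$ term is not bounded by $C_1$, so your final displayed inequality is off by that single finite term; this is harmless for $\ell_2$-membership, and the paper's own displayed estimate has the same looseness.
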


\begin{proof}
By the inequality
$\displaystyle{\omega_{n + 1} C_n C_{n + 1}
=
\omega_{n+1} C_n \frac{1}{1 + \omega_{n+1} C_n} 
\leqq 1}$,
we estimate
$(\,^m \widehat{Y})^2$
by
\begin{eqnarray*}
(\,^m \widehat{Y})^2 
=
\omega_1 C_1 
\left( \prod_{n = 1} ^{m -1}
\omega_{n + 1} C_n C_{n + 1} \right)
C_m 
\le
\omega_1 C_1 C_m.
\end{eqnarray*}
If $\{\omega_m\}_{m = 1}^\infty$ has property (SC), then
$
\|\widehat{Y}\|_2^2
\leqq 1 +
\omega_1 C_1 \sum_{m = 1}^\infty 
C_m
< \infty$.
\end{proof}

\begin{lemma}
If property (SC) holds,
then there exist
positive square summable sequences
$\{\widehat{Y}_k = 
( \,^l\widehat{Y}_k )_{l = 0}^\infty  ; k \in \mathbb{N} \}
\subset \ell_2(\mathbb{N})
$ such that
for every $k, l\in \mathbb{N}$ and $m \geqq k$,
$|\,^l Y^{(m)} _k|  \leqq \,^l\widehat{Y}_k$.
\end{lemma}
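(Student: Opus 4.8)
The plan is to induct on the column index $k$, reusing the explicit description of the zeroth column together with the three-term column relation that already appeared in the proof of Lemma \ref{LemmaWeakConvergence}. The dominators $\widehat{Y}_k$ will be produced by running that same relation with absolute values and positive coefficients, so that square summability and positivity propagate automatically from each pair of earlier columns to the next.

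For the base case $k = 0$ I would take $\widehat{Y}_0 := \widehat{Y}$, the sequence introduced just above, which lies in $\ell_2(\mathbb{N})$ precisely because property (SC) holds (the preceding lemma). The pointwise domination $|\,^l Y^{(m)}_0| \leq \,^l\widehat{Y}_0$ is read off directly from Lemma \ref{Lemma0Y}: each displayed entry of $Y^{(m)}_0$ equals $\,^l\widehat{Y}$ multiplied by a unimodular power of $i$ and by a factor of the form $1/(1 + D_0^{(m)})$ or $1/(1 + C_l D_l^{(m)})$, and since $C_l > 0$ and $D_l^{(m)} > 0$ by Lemma \ref{LemmaSeqD}, together with $C_l \leq 1$, every such factor has modulus at most $1$. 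The entries $\,^n\widehat{Y} = \sqrt{\omega_1 \cdots \omega_n}\, C_1 \cdots C_n$ are strictly positive, so $\widehat{Y}_0$ is a positive sequence.

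For the inductive step I would exploit the column relations of $Y^{(m)}(i - X_m) = \mathrm{id}_{\ell_2(\{0, \ldots, m\})}$. The zeroth column yields $Y^{(m)}_1 = (i Y^{(m)}_0 - \delta_0)/\sqrt{\omega_1}$, so $\,^l\widehat{Y}_1 := (\,^l\widehat{Y}_0 + \,^l\delta_0)/\sqrt{\omega_1}$ is a positive square summable dominator of the first column. For $2 \leq k \leq m$ the $(k-1)$-st column gives
\[
Y^{(m)}_k = \frac{- \sqrt{\omega_{k-1}}\, Y^{(m)}_{k-2} + i Y^{(m)}_{k-1} - \delta_{k-1}}{\sqrt{\omega_k}},
\]
so I would set
\[
\,^l\widehat{Y}_k := \frac{\sqrt{\omega_{k-1}}\, {}^l\widehat{Y}_{k-2} + \,^l\widehat{Y}_{k-1} + \,^l\delta_{k-1}}{\sqrt{\omega_k}}.
\]
Taking absolute values entrywise and applying the induction hypotheses to $\widehat{Y}_{k-2}$ and $\widehat{Y}_{k-1}$ gives $|\,^l Y^{(m)}_k| \leq \,^l\widehat{Y}_k$; as a positive linear combination of the three $\ell_2$ sequences $\widehat{Y}_{k-2}$, $\widehat{Y}_{k-1}$ and $\delta_{k-1}$, the sequence $\widehat{Y}_k$ is again positive and square summable.

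The only point that needs genuine care is the range of validity of the column relation: the statement demands the bound for every $m \geq k$, including the boundary $m = k$, whereas in Lemma \ref{LemmaWeakConvergence} only $k < m$ was invoked. I would verify directly from the band structure of $X_m$ that the $(k-1)$-st column of $Y^{(m)}(i - X_m) = \mathrm{id}$ produces exactly the displayed three-term relation for all $2 \leq k \leq m$, so $m = k$ is covered; and since $m \geq k$ forces $m \geq k-1$ and $m \geq k-2$, the induction hypotheses apply at the same $m$. This boundary bookkeeping, rather than any delicate estimate, is the main obstacle.
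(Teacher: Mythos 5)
Your proof is correct and follows essentially the same route as the paper's: induction on $k$, with $\widehat{Y}_0 = \widehat{Y}$ dominating the zeroth column via Lemma \ref{Lemma0Y} (since $C_l > 0$ and $D^{(m)}_l > 0$ make each extra factor have modulus at most $1$), and the recursive definition $\widehat{Y}_k = (\widehat{Y}_{k-1} + \sqrt{\omega_{k-1}}\,\widehat{Y}_{k-2} + \delta_{k-1})/\sqrt{\omega_k}$ extracted from the $(k-1)$-st column of $Y^{(m)}(i - X_m) = \mathrm{id}$. The boundary case $m = k$ that you flag is treated identically in the paper, which states the column relation for all $m \geqq k \geqq 1$, and indeed causes no difficulty.
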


\begin{proof}
We prove by induction on $k$.
Define $\widehat{Y}_0$ by $\widehat{Y}$.
By Lemma \ref{Lemma0Y} and the definition of $\widehat{Y}$,
we have $|\,^l Y^{(m)} _0 |  \leqq \,^l \widehat{Y}_0$.
For $m \geqq 1$, the vector $Y^{(m)}_1$ is expressed by 
$(i Y^{(m)}_0 - \delta_0)/{\sqrt{\omega_1}}$.
Absolute values of entries are dominated by those of $(\widehat{Y}_0 + \delta_0) /{\sqrt{\omega_1}}$. Define $\widehat{Y}_1$ by 
$(\widehat{Y}_0 + \delta_0)/{\sqrt{\omega_1}} $.

For $m \geqq k \geqq 1$, 
since the $(k - 1)$-st column of $Y^{(m)} (i - X_m)$ is
\[ \delta_{k -1} = 
- \sqrt{\omega_{k - 1}} Y_{k - 2}^{(m)} 
+ i Y_{k -1}^{(m)}
- \sqrt{\omega_k} Y_k^{(m)},
\]
the column vector $Y^{(m)}_k$ is expressed by 
$(i Y^{(m)}_{k - 1} -
\sqrt{\omega_{k-1}} Y^{(m)}_{k - 2} - \delta_{k-1})/ \sqrt{\omega_k}$.
Absolute values of entries are dominated by those of 
\[
\widehat{Y}_k =
\frac{\widehat{Y}_{k - 1} +
\sqrt{\omega_{k-1}} \widehat{Y}_{k - 2} 
+ \delta_{k-1}} {\sqrt{\omega_k}}.
\]
Repeating this procedure, we obtain vectors 
$\{\widehat{Y}_k\}$ satisfying the above lemma.
\end{proof}

\begin{proposition}\label{PropositionSCSC}
If property (SC) holds, then the sequences 
$\{Y^{(2m)}\}_m \subset \mathbb{B}(\ell_2(\mathbb{N}))$ and 
$\{Y^{(2m + 1)}\}_m \subset \mathbb{B}(\ell_2(\mathbb{N}))$
respectively converge in the strong operator topology.
\footnote{Property (SC) is a `Sufficient Condition' 
for the `Strong Convergence.'}
\end{proposition}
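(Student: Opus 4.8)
The plan is to upgrade the weak convergence already established in Lemma \ref{LemmaWeakConvergence} to strong convergence, the whole point being that strong convergence requires control of the $\ell_2(\mathbb{N})$-norms of the columns, not merely their entries. Since every $Y^{(m)}$ satisfies $\|Y^{(m)}\| \le 1$ and the finitely supported vectors $\oplus_{\mathbb{N}} \mathbb{C}$ are dense in $\ell_2(\mathbb{N})$, a standard $\varepsilon/3$ argument reduces the claim to showing that, for each fixed $k$, the column vectors $Y^{(2m)}_k = Y^{(2m)}\delta_k$ and $Y^{(2m+1)}_k = Y^{(2m+1)}\delta_k$ converge \emph{in norm}. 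Indeed, once each $\{Y^{(2m)}\delta_k\}_m$ is norm-Cauchy, linearity propagates this to finitely supported $\xi$, and the uniform bound $\|Y^{(2m)}\| \le 1$ then propagates it to all $\xi \in \ell_2(\mathbb{N})$; the odd indices are identical.

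First I would record pointwise (entrywise) convergence: by Lemma \ref{LemmaWeakConvergence}, or more precisely by the induction in its proof, for every pair $k, l$ the sequences $\{\,^l Y^{(2m)}_k\}_m$ and $\{\,^l Y^{(2m + 1)}_k\}_m$ converge, say to numbers $\,^l\eta_k$ and $\,^l\eta'_k$. Next I would invoke the immediately preceding lemma, which under property (SC) supplies square summable sequences $\widehat{Y}_k = (\,^l\widehat{Y}_k)_{l=0}^\infty \in \ell_2(\mathbb{N})$ with $|\,^l Y^{(m)}_k| \le \,^l\widehat{Y}_k$ for all $m \ge k$. This domination by a \emph{single} fixed $\ell_2$-majorant, independent of $m$, is exactly the extra ingredient that distinguishes the (SC) case from the general weak-convergence statement.

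The heart of the matter is then a dominated-convergence estimate on $\mathbb{N}$ equipped with counting measure. Fixing $k$ and abbreviating $a^{(m)}_l = \,^l Y^{(2m)}_k$, I have $a^{(m)}_l \to \,^l\eta_k$ pointwise in $l$; moreover the limit inherits $|\,^l\eta_k| \le \,^l\widehat{Y}_k$, so $|a^{(m)}_l - \,^l\eta_k|^2 \le 4(\,^l\widehat{Y}_k)^2$ with $\sum_l (\,^l\widehat{Y}_k)^2 < \infty$. Dominated convergence yields $\sum_l |a^{(m)}_l - \,^l\eta_k|^2 \to 0$, i.e. $Y^{(2m)}_k \to \eta_k$ in $\ell_2(\mathbb{N})$, and the same along odd indices gives $Y^{(2m+1)}_k \to \eta'_k$. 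This is precisely the norm convergence of columns needed to close the reduction of the first paragraph.

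The only genuinely delicate point is the limit interchange packaged in that dominated-convergence step: entrywise convergence of matrices never suffices for strong convergence by itself, since $\ell_2$-mass can escape to infinity. What rules this out here is property (SC), which through the existence of the fixed square summable majorant $\widehat{Y}_k$ forces uniform tail control and hence genuine norm convergence. The remaining work is routine: verifying the $\varepsilon/3$ reduction carefully and checking that the majorant bound holds for all $m \ge k$ (so that every tail estimate is uniform in the approximation index), both of which follow directly from the cited lemmas.
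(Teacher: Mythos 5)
Your proposal is correct and follows essentially the same route as the paper: entrywise convergence from Lemma \ref{LemmaWeakConvergence}, domination of each column by the square summable majorant $\widehat{Y}_k$ from the preceding lemma, Lebesgue's dominated convergence theorem to get norm convergence of columns, and the uniform bound $\|Y^{(m)}\|\leqq 1$ together with density of finitely supported vectors to conclude strong operator convergence. The only cosmetic difference is that you spell out the $\varepsilon/3$ reduction which the paper leaves implicit.
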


\begin{proof}
Suppose that property (SC) holds.
For every $k \in \mathbb{N}$, 
the sequence of the $k$-th column vectors 
$\{Y^{(2m)}_k\}_{m}$ strongly converges. 
Indeed, by Lemma \ref{LemmaWeakConvergence}, 
for every $l \in \mathbb{N}$,
$\{\,^l Y^{(2m)}_k \}_{m}$ converges.
For $m \geqq k/2$,
the absolute value of $Y^{(2m)}_k$ dominated by a square summable positive sequence $\widehat{Y}_k$. 
By  Lebesgue's dominated convergence theorem,
$\{Y^{(2m)}_k \}_{m}$ 
strongly converges. 
By the same proof, for every $k$,
the sequence of vectors $\{Y^{(2m + 1)}_k \}_{m}$ 
also strongly converges. 

Since the operator norms of $Y^{(m)} = (i - X_m)^{-1}$ are at most $1$,
the sequences 
$\{Y^{(2m)}\}_{m = 0}^\infty$ and 
$\{Y^{(2m + 1)}\}_{m = 0}^\infty$
respectively converge in the strong operator topology.
\end{proof}

Now we are ready to construct two self-adjoint operators $X_\mathrm{even}$ and $X_\mathrm{odd}$,
in the case that $\{\omega_n\}$ holds property (SC).
Let $Y^\mathrm{even}$ denote the strong limit of $\{Y^{(2m)}\}_{m = 0}^\infty$
and 
let $Y^\mathrm{odd}$ denote the strong limit of 
$\{Y^{(2m + 1)}\}_{m = 0}^\infty$.
Recall that $X^*$ is the adjoint of the Jacobi matrix $X : \oplus_\mathbb{N} \mathbb{C} 
\rightarrow \oplus_\mathbb{N} \mathbb{C} \subset \ell_2(\mathbb{N})$.

\begin{lemma}\label{LemmaInv}
If property (SC) holds, then
\begin{eqnarray*}
\mathrm{Image}(Y^{\mathrm{even}})
\subset \mathrm{Dom}(X^*), &&
(i - X^*) Y^\mathrm{even} = \mathrm{id}_{\ell_2(\mathbb{N})},\\
\mathrm{Image}(Y^{\mathrm{odd}})
\subset \mathrm{Dom}(X^*), &&
(i - X^*) Y^\mathrm{odd} = \mathrm{id}_{\ell_2(\mathbb{N})}.
\end{eqnarray*}
\end{lemma}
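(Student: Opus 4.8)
The plan is to verify the two asserted identities directly from the definition of the adjoint $X^*$, passing to the limit in the resolvent identity for the self-adjoint truncations $X_m$. Fix $\xi \in \ell_2(\mathbb{N})$ and write $\eta = Y^\mathrm{even}\xi$; since $\|Y^\mathrm{even}\| \leqq 1$ we have $\eta \in \ell_2(\mathbb{N})$. Recall that $\eta \in \mathrm{Dom}(X^*)$ with $X^*\eta = \zeta$ if and only if $\langle X\phi, \eta\rangle = \langle\phi,\zeta\rangle$ for every $\phi$ in the domain $\oplus_\mathbb{N}\mathbb{C}$ of $X$. Thus it suffices to produce $\zeta \in \ell_2(\mathbb{N})$ with $\langle X\phi,\eta\rangle = \langle\phi,\zeta\rangle$ for all finitely supported $\phi$; the identity $(i - X^*)\eta = \xi$ is then the assertion $\zeta = i\eta - \xi$.

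First I would record the finite-level identity. Let $P_m$ be the orthogonal projection of $\ell_2(\mathbb{N})$ onto $\ell_2(\{0,1,\dots,m\})$, so that $Y^{(m)} = (i - X_m)^{-1}P_m$ in $\mathbb{B}(\ell_2(\mathbb{N}))$. Fix $\phi$ supported in $\{0,\dots,N\}$. For $m \geqq N+1$ the truncation does not cut off the last coupling, so $X_m\phi = X\phi$. Since $\phi$ and $Y^{(m)}\xi$ both lie in the finite block and $X_m$ is self-adjoint there, and since the elementary identity $X_m(i - X_m)^{-1} = i(i - X_m)^{-1} - \mathrm{id}$ holds, we obtain
\[
\langle X\phi, Y^{(m)}\xi\rangle = \langle X_m\phi, Y^{(m)}\xi\rangle = \langle\phi, X_m Y^{(m)}\xi\rangle = \langle\phi, iY^{(m)}\xi - P_m\xi\rangle .
\]

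Next I would let $m\to\infty$ through even indices. By Proposition \ref{PropositionSCSC} the sequence $Y^{(2m)}$ converges strongly to $Y^\mathrm{even}$, so $Y^{(2m)}\xi \to \eta$ and $P_m\xi \to \xi$ in $\ell_2(\mathbb{N})$, while $X\phi$ and $\phi$ are fixed. Taking inner products against fixed vectors is continuous, so the displayed equality passes to the limit and yields $\langle X\phi, \eta\rangle = \langle\phi, i\eta - \xi\rangle$ for every finitely supported $\phi$. By the characterization of the adjoint this says exactly that $\eta \in \mathrm{Dom}(X^*)$ and $X^*\eta = i\eta - \xi$, that is, $(i - X^*)Y^\mathrm{even}\xi = \xi$. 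As $\xi$ was arbitrary, $\mathrm{Image}(Y^\mathrm{even}) \subset \mathrm{Dom}(X^*)$ and $(i - X^*)Y^\mathrm{even} = \mathrm{id}_{\ell_2(\mathbb{N})}$. The statements for $Y^\mathrm{odd}$ follow verbatim by running the same argument along the odd indices $2m+1$.

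The only delicate point is the interface between the finite matrix $X_m$ and the unbounded operator $X^*$ at the truncation boundary, and this is what I expect to be the main thing to get right. Phrasing everything through the bilinear pairing $\langle X\phi,\cdot\rangle$ with $\phi$ finitely supported is what makes it harmless: for each fixed $\phi$ the equality $X_m\phi = X\phi$ holds for all large $m$, so no boundary error survives in the limit. This is precisely what lets me avoid estimating the genuinely awkward boundary quantity $\sqrt{\omega_{m+1}}\,{}^mY^{(m)}_k$ that a more naive ``apply $X^*$ to the approximants and use closedness'' argument would force upon me, and which need not even remain bounded under property (SC).
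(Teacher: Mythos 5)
Your proof is correct and follows essentially the same route as the paper's: both verify the adjoint relation by pairing against finitely supported vectors $\phi$, using that $X_m\phi = X\phi$ once $m$ exceeds the support, applying the finite-dimensional identity $X_m(i-X_m)^{-1} = i(i-X_m)^{-1} - \mathrm{id}$, and passing to the limit via the strong convergence of $Y^{(2m)}$ (resp.\ $Y^{(2m+1)}$). The only difference is cosmetic — the paper writes the pairing as $\langle Y^{\mathrm{even}}\xi, (-i-X)\eta\rangle = \langle\xi,\eta\rangle$ rather than $\langle X\phi,\eta\rangle = \langle\phi, i\eta-\xi\rangle$ — so no further comment is needed.
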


\begin{proof}
We prove the first equality.
It suffices to show 
\begin{eqnarray*}
\langle Y^\mathrm{even} \xi, (- i - X) \eta\rangle
= 
\langle \xi, \eta \rangle,
\end{eqnarray*}
for $\xi \in \mathrm{Image}(Y^{\mathrm{even}})$,
and every finitely supported vector 
$\eta \in \oplus_\mathbb{N} \mathbb{C}$.
If $m$ is so large that $\{0, 1, \cdots, 2m - 1\}$ includes $\mathrm{supp}(\eta)$, 
then
\[
\langle Y^\mathrm{even} \xi, (- i - X) \eta\rangle
=
\langle Y^\mathrm{even} \xi, (- i - X_{2m}) \eta\rangle.\]
Therefore we have
\begin{eqnarray*}
\langle Y^\mathrm{even} \xi, (- i - X) \eta\rangle
&=& 
\lim_m 
\langle Y^\mathrm{(2m)} \xi, (- i - X_{2m}) \eta\rangle\\
&=& 
\lim_m 
\langle (i - X_{2m}) Y^\mathrm{(2m)} \xi,  \eta\rangle\\
&=& 
\lim_m 
\langle \xi,  P_{2m} \eta \rangle,
\end{eqnarray*}
where $P_{2m}$ stands for the orthogonal projection onto 
$\ell_2(\{0, 1, \cdots, 2m\})$.
It follows that 
$\langle Y^\mathrm{even} \xi, (- i - X) \eta\rangle
= 
\langle \xi, \eta\rangle$.
\end{proof}

\begin{lemma}
\label{LemmaImages}
If property (SC) holds,
then 
\[
\mathrm{Image}(Y^\mathrm{even})^* = \mathrm{Image}(Y^\mathrm{even}), \quad
\mathrm{Image}(Y^\mathrm{odd})^* = \mathrm{Image}(Y^\mathrm{odd}).
\]
\end{lemma}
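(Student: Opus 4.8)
The plan is to deduce the identity from a single structural fact: that the strong limits $Y^{\mathrm{even}}$ and $Y^{\mathrm{odd}}$ are \emph{normal} operators. Indeed, for any bounded normal operator $N$ one has $\mathrm{Image}(N)=\mathrm{Image}(N^*)$: writing the polar decomposition $N=U|N|$, normality makes the phase $U$ commute with $|N|$ (and $Y^{\mathrm{even}}$ is injective by Lemma \ref{LemmaInv}, so $U$ is unitary), whence both images coincide with $\mathrm{Image}(|N|)$. I read $\mathrm{Image}(Y^{\mathrm{even}})^{*}$ as $\mathrm{Image}((Y^{\mathrm{even}})^{*})$, the image of the adjoint (equivalently, the complex-conjugate subspace $J\,\mathrm{Image}(Y^{\mathrm{even}})$ for the conjugation $J$ below). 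So it suffices to prove normality.

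First I would introduce the complex conjugation $J$ on $\ell_2(\mathbb{N})$, defined by $J\left(\sum_k c_k\delta_k\right)=\sum_k \overline{c_k}\,\delta_k$. Since every entry of $X_m$ is real, $JX_mJ=X_m$, and since $JiJ=-i$ as scalar operators, this yields the finite-dimensional identity
\[
(Y^{(m)})^{*} = (-i-X_m)^{-1} = \left(J(i-X_m)J\right)^{-1} = J(i-X_m)^{-1}J = JY^{(m)}J,
\]
valid as operators on $\ell_2(\mathbb{N})$ (both sides vanishing off $\ell_2(\{0,\dots,m\})$). By Proposition \ref{PropositionSCSC}, $Y^{(2m)}\to Y^{\mathrm{even}}$ strongly; conjugating by the fixed isometric $J$ preserves strong convergence, so $JY^{(2m)}J\to JY^{\mathrm{even}}J$ strongly, and the identity above gives $(Y^{(2m)})^{*}\to JY^{\mathrm{even}}J$ strongly. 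As strong convergence of $Y^{(2m)}$ forces $(Y^{(2m)})^{*}\to (Y^{\mathrm{even}})^{*}$ weakly, uniqueness of limits identifies $JY^{\mathrm{even}}J=(Y^{\mathrm{even}})^{*}$; in particular the adjoints converge strongly. Each $Y^{(m)}=(i-X_m)^{-1}$ is a function of the self-adjoint matrix $X_m$, hence normal, so $\|Y^{(m)}\xi\|=\|(Y^{(m)})^{*}\xi\|$ for every $\xi$. Passing to the limit through even indices,
\[
\|Y^{\mathrm{even}}\xi\| = \lim_m \|Y^{(2m)}\xi\| = \lim_m \|(Y^{(2m)})^{*}\xi\| = \|(Y^{\mathrm{even}})^{*}\xi\|,
\]
which is exactly normality of $Y^{\mathrm{even}}$. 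The identical computation through odd indices handles $Y^{\mathrm{odd}}$.

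The hard part is precisely this passage to the limit: strong convergence does not in general preserve normality, since the adjoints of a strongly convergent sequence need not converge strongly. What rescues the argument is the reality of the Jacobi matrix, encoded in $JX_mJ=X_m$, which rewrites the adjoint as a $J$-conjugate and thereby upgrades the weak convergence of $\{(Y^{(2m)})^{*}\}$ to strong convergence. Once normality is established the image identity is immediate; alternatively, one can exhibit it explicitly by taking the strong limit of the resolvent identity $(Y^{(m)})^{*}-Y^{(m)}=2i\,(Y^{(m)})^{*}Y^{(m)}=2i\,Y^{(m)}(Y^{(m)})^{*}$ to obtain
\[
Y^{\mathrm{even}} = (Y^{\mathrm{even}})^{*}\left(\mathrm{id}-2iY^{\mathrm{even}}\right), \qquad (Y^{\mathrm{even}})^{*} = Y^{\mathrm{even}}\left(\mathrm{id}+2i(Y^{\mathrm{even}})^{*}\right),
\]
the two factorizations giving the inclusions $\mathrm{Image}(Y^{\mathrm{even}})\subseteq\mathrm{Image}((Y^{\mathrm{even}})^{*})$ and its reverse.
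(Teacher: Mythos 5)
Your proof is correct, and the decisive step is the same one the paper uses: the reality of the Jacobi matrix gives $(Y^{(m)})^{*}=JY^{(m)}J$, which upgrades the a priori merely weak convergence of the adjoints $(Y^{(2m)})^{*}$ to strong convergence and identifies the strong limit with $(Y^{\mathrm{even}})^{*}$ (the paper phrases this as ``all arguments remain valid with $i$ replaced by $-i$'' and as $(Y^{(m)})^{*}=\overline{Y^{(m)}}$, but it is the same conjugation trick). Where you diverge is in the finish. The paper passes the resolvent identity $(Y^{(2m)})^{*}=Y^{(2m)}\bigl(1+2i(Y^{(2m)})^{*}\bigr)$ to the strong limit, reads off $\mathrm{Image}((Y^{\mathrm{even}})^{*})\subseteq\mathrm{Image}(Y^{\mathrm{even}})$, and swaps $i$ for $-i$ to get the reverse inclusion --- exactly the ``alternative'' you record in your last display. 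Your primary route instead transports normality to the limit, which is legitimate precisely because both $Y^{(2m)}$ and $(Y^{(2m)})^{*}$ converge strongly, so $\|Y^{\mathrm{even}}\xi\|=\|(Y^{\mathrm{even}})^{*}\xi\|$ for all $\xi$, and then invokes the polar decomposition of an injective normal operator to equate the two images with $\mathrm{Image}(|Y^{\mathrm{even}}|)$. That packaging is a bit more conceptual, and your remark that strong convergence alone does not preserve normality correctly isolates why the $J$-conjugation is indispensable; the cost is the extra inputs that $Y^{\mathrm{even}}$ is injective (Lemma \ref{LemmaInv}, correctly cited) and that the phase commutes with the modulus, whereas the resolvent-identity factorizations deliver both inclusions with no appeal to polar decompositions. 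Either way the argument is sound.
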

\begin{proof}
Since the matrix coefficients of $i - X_m$ are symmetric,
those of $Y^{(m)}$ are also symmetric.
More precisely, $\,^k Y^{(m)}_l$ is equal to $\,^l Y^{(m)}_k$.
It follows that the adjoint $(Y^{(m)})^*$ is equal to $\overline{Y^{(m)}} = \left( \overline{\,^l Y^{(m)}_k} \right)_{l, k}$.

Notice that all the arguments in this paper is also valid,
even if we replace `$i$' with `$-i$.'
It follows that the sequences 
\[\{(Y^{(2m)})^* = \overline{Y^{(2m)}}\}_{m = 0}^\infty, \quad
\{(Y^{(2m + 1)})^* = \overline{Y^{(2m + 1)}}\}_{m = 0}^\infty\] strongly converge.
Observing their matrix coefficients, we have
\begin{eqnarray*}
\lim_m (Y^{(2m)})^* 
= \overline{Y^\mathrm{even}} 
= (Y^\mathrm{even})^*, \quad
\lim_m (Y^{(2m + 1)})^* 
= \overline{Y^\mathrm{odd}}
= (Y^\mathrm{odd})^*.
\end{eqnarray*}
By the resolvent identity, we have
\begin{eqnarray*}
(- i - X_{2m})^{-1} 
&=& (i - X_{2m})^{-1} + 2i (i - X_{2m})^{-1} ( - i - X_{2m})^{-1},\\
(Y^{(2m)})^* 
&=& Y^{(2m)} \left( 1 + 2i \overline{Y^{(2m)}} \right)
= Y^{(2m)} (1 + 2i (Y^{(2m)})^*).
\end{eqnarray*}
Taking limits with respect to the strong operator topology, we have
\begin{eqnarray*}
(Y^\mathrm{even})^* 
= Y^\mathrm{even} (1 + 2i (Y^\mathrm{even})^*).
\end{eqnarray*}
It follows that the image of $(Y^\mathrm{even})^*$ is
included in that of $Y^\mathrm{even}$.
Replacing `$i$' with `$-i$,' we have
\begin{eqnarray*}
\mathrm{Image}(Y^\mathrm{even})^* 
= \mathrm{Image}Y^\mathrm{even}.
\end{eqnarray*}
By the same proof, we also obtain 
the equality $\mathrm{Image}(Y^\mathrm{odd})^* 
= \mathrm{Image}Y^\mathrm{odd}$.
\end{proof}

\begin{lemma}
\label{LemmaInverseAdjoint}
Let $Y$ be a bounded operator on a Hilbert space $\mathcal{H}$.
Suppose that $Y$ and its adjoint $Y^*$ are injective.
Then the images of $Y$, $Y^*$ are dense in $\mathcal{H}$. 
The adjoint of $Y^{-1} \colon \mathrm{Image}(Y) \rightarrow \mathcal{H}$ is equal to the inverse of $Y^*$.
\end{lemma}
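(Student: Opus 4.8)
The plan is to prove the two assertions separately, relying only on the standard relation between the image of a bounded operator and the kernel of its adjoint, together with a direct computation of the adjoint of the (generally unbounded) operator $Y^{-1}$.

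First I would establish density of the images. The key identity is $\mathrm{Image}(Y)^\perp = \ker(Y^*)$: a vector $\xi$ is orthogonal to $\mathrm{Image}(Y)$ precisely when $\langle Y\eta, \xi\rangle = \langle \eta, Y^*\xi\rangle = 0$ for all $\eta \in \mathcal{H}$, which happens exactly when $Y^*\xi = 0$. Since $Y^*$ is injective by hypothesis, $\ker(Y^*) = \{0\}$, so $\mathrm{Image}(Y)^\perp = \{0\}$ and $\mathrm{Image}(Y)$ is dense. Applying the same argument with $Y$ replaced by $Y^*$ (and using $(Y^*)^* = Y$, which is injective) shows that $\mathrm{Image}(Y^*)$ is dense as well.

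Because $\mathrm{Image}(Y)$ is dense, the operator $T = Y^{-1}$ with domain $\mathrm{Dom}(T) = \mathrm{Image}(Y)$ is densely defined, so its adjoint $T^*$ is well defined. I would then compute $T^*$ straight from the definition. Every element of $\mathrm{Dom}(T)$ has the form $Y\eta$ for a unique $\eta \in \mathcal{H}$ by injectivity of $Y$, and $T(Y\eta) = \eta$; moreover $\eta$ ranges over all of $\mathcal{H}$ as $Y\eta$ ranges over $\mathrm{Image}(Y)$. A vector $\zeta$ lies in $\mathrm{Dom}(T^*)$ with $T^*\zeta = \zeta^*$ iff $\langle T(Y\eta), \zeta\rangle = \langle Y\eta, \zeta^*\rangle$ for all $\eta \in \mathcal{H}$, i.e. $\langle \eta, \zeta\rangle = \langle \eta, Y^*\zeta^*\rangle$ for all $\eta$. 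This is equivalent to $\zeta = Y^*\zeta^*$, that is, $\zeta \in \mathrm{Image}(Y^*)$ and $\zeta^* = (Y^*)^{-1}\zeta$, using injectivity of $Y^*$. Hence $\mathrm{Dom}(T^*) = \mathrm{Image}(Y^*) = \mathrm{Dom}((Y^*)^{-1})$ and $T^*\zeta = (Y^*)^{-1}\zeta$, which is exactly the claim $(Y^{-1})^* = (Y^*)^{-1}$.

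The computation itself is routine; the only point demanding care is the bookkeeping of domains of the unbounded operators $Y^{-1}$ and $(Y^*)^{-1}$, since $(Y^{-1})^* = (Y^*)^{-1}$ is an equality of unbounded operators and must include the identification of domains, not merely of the action on a common domain. I expect the main (mild) obstacle to be verifying that the quantifier ``for all $\eta \in \mathcal{H}$'' is legitimate, namely that the vectors $Y\eta$ exhaust the test vectors defining $\mathrm{Dom}(T^*)$; this holds because $Y$ is bounded and everywhere defined, so $\mathrm{Image}(Y) = \{Y\eta : \eta \in \mathcal{H}\}$ with no domain restriction on $\eta$.
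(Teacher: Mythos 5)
Your proof is correct, and it reaches the conclusion by a genuinely different route from the paper. The paper handles the second assertion through graphs: it introduces the unitaries $U(\xi \oplus \eta) = \eta \oplus \xi$ and $V(\xi \oplus \eta) = (-\eta) \oplus \xi$ on $\mathcal{H} \oplus \mathcal{H}$, uses the standard identities $\mathcal{G}(Z^*) = (V\mathcal{G}(Z))^\perp$ and $\mathcal{G}(Z^{-1}) = U\mathcal{G}(Z)$, and then concludes $\mathcal{G}((Y^*)^{-1}) = \mathcal{G}((Y^{-1})^*)$ from the commutation relation $-UV = VU$. You instead unwind the definition of the adjoint of the densely defined operator $T = Y^{-1}$ directly, using the fact that $Y$ is bounded and everywhere defined to parametrize $\mathrm{Dom}(T) = \mathrm{Image}(Y)$ as $\{Y\eta : \eta \in \mathcal{H}\}$, which turns the defining condition $\langle T(Y\eta), \zeta\rangle = \langle Y\eta, \zeta^*\rangle$ into $\zeta = Y^*\zeta^*$ and immediately yields both the domain identification $\mathrm{Dom}(T^*) = \mathrm{Image}(Y^*)$ and the action $T^*\zeta = (Y^*)^{-1}\zeta$. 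Your version is more elementary and self-contained (no graph formalism needed), and you are right to flag the domain bookkeeping as the one delicate point; the paper's version is more abstract but extends with no change to closed, densely defined, injective operators with dense range, where the parametrization of the image by a single everywhere-defined application of $Y$ is no longer available. For the lemma as stated, with $Y$ bounded, the two arguments are equally rigorous. The first assertion (density of the images via $\mathrm{Image}(Y)^\perp = \ker(Y^*)$) is identical in both.
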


\begin{proof}
By the equalities 
$\overline{\mathrm{Image}(Y)} = \mathrm{Ker}(Y^*)^\perp$
and 
$\overline{\mathrm{Image}(Y^*)} = \mathrm{Ker}(Y)^\perp$, 
we obtain the first assertion.

For a closed operator $Z$ on $\mathcal{H}$, let $\mathcal{G}(Z)$ denote the graph 
\[\{\xi \oplus Z \xi ; \xi \in \mathrm{Dom}(Z)\} \subset \mathcal{H} \oplus \mathcal{H}.\]
We define two unitary operators $U$, $V$ on $\mathcal{H} \oplus \mathcal{H}$ by
\begin{eqnarray*}
V(\xi \oplus \eta) = (- \eta) \oplus \xi, \quad
U(\xi \oplus \eta) = \eta \oplus \xi.
\end{eqnarray*}
Then we have $\mathcal{G}(Z^*) = (V \mathcal{G}(Z))^\perp$.
When $Z$ is injective, the graph of 
\[
Z^{-1} \colon \mathrm{Image}(Z) \rightarrow \mathrm{Dom}(Z)
\] 
is given by
$\mathcal{G}(Z^{-1}) = U \mathcal{G}(Z)$.

The graph of $(Y^*)^{-1}$ is given by
$U((V \mathcal{G}(Y))^\perp)$.
The graph of $(Y^{-1})^{*}$ is given by
$(V U\mathcal{G}(Y))^\perp$.
Since $- U V = V U$, we obtain 
$\mathcal{G}((Y^*)^{-1}) = \mathcal{G}((Y^{-1})^{*})$. 
\end{proof}

We define $X_\mathrm{even}$ and $X_\mathrm{odd}$ by
\[X_\mathrm{even} = X^* |_{\mathrm{Image}Y^\mathrm{even}},
X_\mathrm{odd} = X^* |_{\mathrm{Image}Y^\mathrm{odd}}.\]

\begin{proposition}
If the Jacobi sequence $\{\omega_m\}_{m = 1}^\infty$ has property (SC),
then 
\begin{itemize}
\item
$X_\mathrm{even}$ and $X_\mathrm{odd}$ are extensions of $X$;
\item
$X_\mathrm{even}$ and $X_\mathrm{odd}$ are self-adjoint operators;
\item
$X_\mathrm{even}$ and $X_\mathrm{odd}$ have finite moments of all orders $m$ whose Jacobi sequence is identical to $\{\omega_m\}_{m = 1}^\infty$;
\item
Their Stieltjes transforms $\langle (i - X_\mathrm{even})^{-1} \delta_0, \delta_0 \rangle$, $\langle (i - X_\mathrm{odd})^{-1} \delta_0, \delta_0 \rangle$ at $i$ are different.
\end{itemize}
\end{proposition}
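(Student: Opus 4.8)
The plan is to dispatch the four assertions in the order self-adjointness, extension, moments and Jacobi sequence, Stieltjes transforms, since each later item leans on the earlier ones. Throughout I would argue only for $X_\mathrm{even} = X^*|_{\mathrm{Image}Y^\mathrm{even}}$; the argument for $X_\mathrm{odd}$ is word-for-word the same. First I would record two injectivity facts. If $Y^\mathrm{even}\xi = 0$, then Lemma \ref{LemmaInv} gives $\xi = (i - X^*)Y^\mathrm{even}\xi = 0$, so $Y^\mathrm{even}$ is injective. Running the identical argument with $i$ replaced by $-i$ (legitimate, as noted in the proof of Lemma \ref{LemmaImages}, where $(Y^{(2m)})^* = (-i - X_{2m})^{-1}$ and hence $(-i - X^*)(Y^\mathrm{even})^* = \mathrm{id}$) shows $(Y^\mathrm{even})^*$ is injective as well.

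Next I would upgrade the one-sided relations of Lemma \ref{LemmaInv} to genuine inverses. Since $\mathrm{Image}Y^\mathrm{even} = \mathrm{Dom}X_\mathrm{even}$ and $X_\mathrm{even}$ agrees with $X^*$ there, the identity $(i - X^*)Y^\mathrm{even} = \mathrm{id}$ becomes $(i - X_\mathrm{even})Y^\mathrm{even} = \mathrm{id}$, and because $Y^\mathrm{even}$ maps $\ell_2(\mathbb{N})$ bijectively onto $\mathrm{Dom}X_\mathrm{even}$ we get $(Y^\mathrm{even})^{-1} = i - X_\mathrm{even}$. Symmetrically, using Lemma \ref{LemmaImages} to know that $(Y^\mathrm{even})^*$ maps into $\mathrm{Image}(Y^\mathrm{even})^* = \mathrm{Image}Y^\mathrm{even} = \mathrm{Dom}X_\mathrm{even}$, the relation $(-i - X^*)(Y^\mathrm{even})^* = \mathrm{id}$ becomes $((Y^\mathrm{even})^*)^{-1} = -i - X_\mathrm{even}$. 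With these identifications self-adjointness is immediate from Lemma \ref{LemmaInverseAdjoint} applied to $Y = Y^\mathrm{even}$: it yields $((Y^\mathrm{even})^{-1})^* = ((Y^\mathrm{even})^*)^{-1}$, whose left side is $(i - X_\mathrm{even})^* = -i - X_\mathrm{even}^*$ and whose right side is $-i - X_\mathrm{even}$; cancelling $-i$ gives $X_\mathrm{even}^* = X_\mathrm{even}$. The extension claim then follows formally: from $X_\mathrm{even} \subset X^*$ I take adjoints, obtaining $\overline{X} = X^{**} \subset X_\mathrm{even}^* = X_\mathrm{even}$, so $X \subset X_\mathrm{even}$.

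For the moment statement I would observe that $\delta_0 \in \mathrm{Dom}(X^n) \subset \mathrm{Dom}(X_\mathrm{even}^n)$ for every $n$, since finitely supported vectors remain finitely supported under $X$ and $X_\mathrm{even}^n \delta_0 = X^n \delta_0$. Hence in the spectral resolution $X_\mathrm{even} = \int \lambda\, dE(\lambda)$ the probability measure $\mu_\mathrm{even}(\cdot) = \langle E(\cdot)\delta_0, \delta_0\rangle$ satisfies $\int \lambda^{2n}\, d\mu_\mathrm{even} = \|X_\mathrm{even}^n\delta_0\|^2 < \infty$, so all moments are finite and $\int \lambda^n\, d\mu_\mathrm{even} = \langle X^n\delta_0, \delta_0\rangle$. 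The odd moments vanish because $X$ has zero diagonal and maps the span of even-indexed basis vectors into that of odd-indexed ones, so $\mu_\mathrm{even}$ is symmetric; and since $\mathrm{span}\{X^n\delta_0\} = \oplus_\mathbb{N}\mathbb{C}$ is dense, $\delta_0$ is cyclic and Gram--Schmidt on $\{\delta_0, X\delta_0, X^2\delta_0, \dots\}$ returns exactly $\{\delta_k\}$. Thus the matrix of $X_\mathrm{even}$ in the orthonormal polynomial basis of $\mu_\mathrm{even}$ is $X$, and its three-term recurrence reads off the Jacobi sequence $\{\omega_m\}$ (equivalently, $\mu_\mathrm{even}$ has moment sequence $\{M_n\}$, which determines $\{\omega_m\}$ by the bijective correspondence).

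Finally, for the Stieltjes transforms I would use $(i - X_\mathrm{even})^{-1} = Y^\mathrm{even}$ to write $\langle (i - X_\mathrm{even})^{-1}\delta_0, \delta_0\rangle = \,^0 Y^\mathrm{even}_0 = \lim_m \,^0 Y^{(2m)}_0 = G_\mathrm{even}$, and likewise $\langle (i - X_\mathrm{odd})^{-1}\delta_0, \delta_0\rangle = G_\mathrm{odd}$; property (SC) with Proposition \ref{PropositionSC} then gives $G_\mathrm{even} \neq G_\mathrm{odd}$. The one place demanding genuine care, and the main obstacle, is the domain bookkeeping in the self-adjointness step: one must confirm that $i - X_\mathrm{even}$ is the full two-sided inverse of $Y^\mathrm{even}$ rather than a mere one-sided inverse, and that it is exactly Lemma \ref{LemmaImages} which forces the domain of $((Y^\mathrm{even})^*)^{-1}$ to coincide with $\mathrm{Dom}X_\mathrm{even}$, so that Lemma \ref{LemmaInverseAdjoint} may be invoked with no domain mismatch.
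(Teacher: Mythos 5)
Your proposal is correct and follows essentially the same route as the paper: injectivity of $Y^\mathrm{even}$ and $(Y^\mathrm{even})^*$ from Lemma \ref{LemmaInv}, the identification $((Y^\mathrm{even})^*)^{-1} = -i - X_\mathrm{even}$ via Lemma \ref{LemmaImages}, self-adjointness via Lemma \ref{LemmaInverseAdjoint}, the extension property by taking adjoints, and the Stieltjes transforms as strong limits distinguished by property (SC). The only cosmetic differences are that the paper obtains injectivity of $(Y^\mathrm{even})^*$ through the conjugation $J Y^\mathrm{even} J = (Y^\mathrm{even})^*$ rather than by rerunning the argument with $-i$, and that you supply more detail than the paper on why the moments are finite and the Jacobi sequence is preserved.
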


In the proof, denote the anti-linear isometry $J \colon \ell_2(\mathbb{N}) \rightarrow \ell_2(\mathbb{N}) $ defined by
$\,^l(J \eta) = \overline{\,^l \eta}$.
Since $J X J = X$, we have $J X^* J = X^*$.
We also have 
\begin{eqnarray*}
J Y^{(m)} J 
= J (i - X_m)^{-1} J
= J (- i - X_m)^{-1} J 
= ((i - X_m)^*)^{-1}
= (Y^{(m)})^{*}.
\end{eqnarray*}
Taking limits with respect to the strong operator topology,
we have 
\[J Y^\mathrm{even} J = (Y^\mathrm{even})^*, \quad
J Y^\mathrm{odd} J = (Y^\mathrm{odd})^*.\]

\begin{proof}
Note that the operator $Y^\mathrm{even} : \ell_2(\mathbb{N}) \rightarrow \mathrm{Image} Y^\mathrm{even}$ is injective by Lemma \ref{LemmaInv} and that
the operator $(Y^\mathrm{even})^* = J Y^\mathrm{even} J$ is also injective.
Since $i - X_\mathrm{even}$ is equal to $(Y^\mathrm{even})^{-1}$,
Lemma \ref{LemmaInverseAdjoint} yields that
\begin{eqnarray*}
(i - X_\mathrm{even})^*
=
((Y^\mathrm{even})^*)^{-1}.
\end{eqnarray*} 
By Lemma \ref{LemmaInv}, 
we have
$(- i - X^*) (Y^\mathrm{even})^* =
J (i - X^*) Y^\mathrm{even} J = \mathrm{id}_{\ell_2(\mathbb{N})}$.
This means that
\[((Y^\mathrm{even})^{*})^{-1} = (- i - X^*) |_{\mathrm{Image}(Y^\mathrm{even})^{*}}.\]
By Lemma \ref{LemmaImages},
we have
\[
(- i - X^*) |_{\mathrm{Image}(Y^\mathrm{even})^{*}} = (- i - X^*) |_{\mathrm{Image}Y^\mathrm{even}}
= - i - X_\mathrm{even}.
\]
It follows that
$(i - X_\mathrm{even})^* = - i - X_\mathrm{even}$.
We conclude that $X_\mathrm{even}$ is self-adjoint.
The same proof works for $X_\mathrm{odd}$.

By taking adjoints of the operators 
$X_\mathrm{even} \subset X^*$,
we obtain the inclusion $X \subset \overline{X} 
\subset X_\mathrm{even}$.
We also have $X \subset X_\mathrm{odd}$.
These operators have the same moments with respect to the state $\langle \cdot \delta_0, \delta_0 \rangle$.
The Stieltjes transform of $X_\mathrm{even}$ at $i$ is equal to
\begin{eqnarray*}
\langle Y^\mathrm{even} \delta_0, 
\delta_0 \rangle 
= 
\lim_{m} \langle Y^{(2m)} \delta_0, 
\delta_0 \rangle
=
\lim_{m} i G_{2m}(i)
=
i G_\mathrm{even}.
\end{eqnarray*}
The Stieltjes transform of $X_\mathrm{odd}$ at $i$ is equal to
$i G_\mathrm{odd}.
$
Since property (SC) holds, these values are different (see Proposition \ref{PropositionSC}).
\end{proof}

\begin{theorem}\label{TheoremTwoMeasures}
If the Jacobi sequence $\{\omega_n\}_{n = 0}^\infty$ has property (SC),
then there exist two probability measures 
$\mu_\mathrm{even}$ and $\mu_\mathrm{odd}$ such that
\begin{itemize}
\item
These measures $\mu_\mathrm{even}$ and $\mu_\mathrm{odd}$ have finite moments of all orders $n$
whose Jacobi sequence is identical to $\omega_n$;
\item
These measures are symmetric under the reflexion $\mathbb{R} \ni x \mapsto -x \in \mathbb{R}$;
\item
Their Stieltjes transforms at $i$ are different.
That is, 
\begin{eqnarray*}
\int_{x \in \mathbb{R}} \frac{1}{i - x} d \mu_\mathrm{even}
\neq
\int_{x \in \mathbb{R}} \frac{1}{i - x} d \mu_\mathrm{odd}.
\end{eqnarray*}
\end{itemize}
\end{theorem}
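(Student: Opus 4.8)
The plan is to realize $\mu_\mathrm{even}$ and $\mu_\mathrm{odd}$ as the scalar spectral measures at $\delta_0$ of the two self-adjoint operators $X_\mathrm{even}$ and $X_\mathrm{odd}$ furnished by the preceding proposition. Concretely, let $E_\mathrm{even}$ be the projection-valued spectral measure of $X_\mathrm{even}$ provided by the spectral theorem and set $\mu_\mathrm{even}(S) = \langle E_\mathrm{even}(S)\delta_0, \delta_0 \rangle$ for Borel sets $S \subset \mathbb{R}$; define $\mu_\mathrm{odd}$ analogously from $X_\mathrm{odd}$. Because $\delta_0$ is a unit vector and $E_\mathrm{even}(\mathbb{R}) = \mathrm{id}$, each value $\mu_\mathrm{even}(S) = \| E_\mathrm{even}(S)\delta_0 \|^2$ is nonnegative with total mass $1$, so both are genuine probability measures.

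Two of the three assertions are then immediate from the preceding proposition. For the moments, the functional calculus gives $\int_\mathbb{R} x^n \, d\mu_\mathrm{even} = \langle X_\mathrm{even}^n \delta_0, \delta_0 \rangle$; since $X_\mathrm{even}$ extends $X$, the vectors $\delta_0, X_\mathrm{even}\delta_0, \dots, X_\mathrm{even}^n \delta_0$ coincide with $\delta_0, X\delta_0, \dots, X^n\delta_0$, all lying in $\oplus_\mathbb{N}\mathbb{C}$, so this integral equals the original moment $\langle X^n\delta_0, \delta_0 \rangle$ and the Jacobi sequence of $\mu_\mathrm{even}$ is $\{\omega_n\}$ (likewise for $\mu_\mathrm{odd}$), finiteness of all orders having already been recorded in that proposition. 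For the Stieltjes transforms, $\int_\mathbb{R} (i - x)^{-1} \, d\mu_\mathrm{even} = \langle (i - X_\mathrm{even})^{-1}\delta_0, \delta_0 \rangle = \langle Y^\mathrm{even}\delta_0, \delta_0 \rangle = iG_\mathrm{even}$, and similarly the odd transform equals $iG_\mathrm{odd}$; these differ exactly because property (SC) holds, by Proposition \ref{PropositionSC}.

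The remaining and genuinely nontrivial point is the symmetry $\mu_\mathrm{even}(-S) = \mu_\mathrm{even}(S)$, which I would establish at the operator level using the parity unitary $U$ on $\ell_2(\mathbb{N})$ defined by $U\delta_k = (-1)^k \delta_k$, satisfying $U = U^* = U^{-1}$ and $U\delta_0 = \delta_0$. Since the Jacobi matrix is tridiagonal with vanishing diagonal, $U X_m U = -X_m$ for every $m$, whence $U Y^{(m)} U = (i + X_m)^{-1} = -\overline{Y^{(m)}} = -(Y^{(m)})^*$, using that $X_m$ is real symmetric and that $(Y^{(m)})^* = \overline{Y^{(m)}}$ as in Lemma \ref{LemmaImages}. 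Passing to strong limits along the even indices (convergence of both $\{Y^{(2m)}\}$ and $\{(Y^{(2m)})^*\}$ being guaranteed by Lemma \ref{LemmaImages}) yields $U Y^\mathrm{even} U = -(Y^\mathrm{even})^*$; taking images and invoking $\mathrm{Image}(Y^\mathrm{even})^* = \mathrm{Image}\, Y^\mathrm{even}$ gives $U(\mathrm{Image}\, Y^\mathrm{even}) = \mathrm{Image}\, Y^\mathrm{even}$. Since $U X^* U = -X^*$, conjugating the definition $X_\mathrm{even} = X^*|_{\mathrm{Image}\, Y^\mathrm{even}}$ produces the exact operator identity $U X_\mathrm{even} U = -X_\mathrm{even}$. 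Through the functional calculus this transports to the spectral measure as $U E_\mathrm{even}(S) U = E_\mathrm{even}(-S)$, so that $\mu_\mathrm{even}(-S) = \langle U E_\mathrm{even}(S) U \delta_0, \delta_0 \rangle = \langle E_\mathrm{even}(S)\delta_0, \delta_0 \rangle = \mu_\mathrm{even}(S)$, and the identical argument handles $\mu_\mathrm{odd}$.

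I expect this last step to carry the real weight. Merely noting that all odd moments vanish does \emph{not} force symmetry, because in the indeterminate regime the reflected measure $S \mapsto \mu_\mathrm{even}(-S)$ shares every moment with $\mu_\mathrm{even}$ yet could a priori differ from it; the exact conjugation $U X_\mathrm{even} U = -X_\mathrm{even}$ is precisely what excludes this. The delicate part is checking that $U$ \emph{preserves} $\mathrm{Image}\, Y^\mathrm{even}$ rather than swapping the even and odd limits, which is why I route the argument through the self-adjointness of the image established in Lemma \ref{LemmaImages}.
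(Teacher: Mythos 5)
Your proof is correct, and for the one genuinely nontrivial assertion --- the symmetry of $\mu_\mathrm{even}$ and $\mu_\mathrm{odd}$ --- it takes a different route from the paper. The paper works at the level of measures and bounded functions: it takes an arbitrary polynomial $P(z,\overline{z})$, sets $p(x) = P((i-x)^{-1},(-i-x)^{-1})$, uses the vanishing of the odd moments of the finite-rank approximants $X_m$ to get $\int p(x)\,d\mu_m = \int p(-x)\,d\mu_m$, rewrites both sides as matrix coefficients of $P(Y^{(m)},(Y^{(m)})^*)$ and $P(-(Y^{(m)})^*,-Y^{(m)})$, passes to the strong limit, and concludes via the density of such functions in $C_0(\mathbb{R})$ that $d\mu_\mathrm{even}(x) = d\mu_\mathrm{even}(-x)$. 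You instead conjugate by the parity unitary $U\delta_k = (-1)^k\delta_k$: the identity $UY^{(2m)}U = -(Y^{(2m)})^*$ passes to the strong limit, and combined with $\mathrm{Image}(Y^\mathrm{even})^* = \mathrm{Image}\,Y^\mathrm{even}$ from Lemma \ref{LemmaImages} it shows $U$ preserves $\mathrm{Image}\,Y^\mathrm{even}$, giving the exact operator identity $UX_\mathrm{even}U = -X_\mathrm{even}$ and hence $UE_\mathrm{even}(S)U = E_\mathrm{even}(-S)$. Both arguments ultimately rest on the same two ingredients (the relation between $Y^{(m)}$, its adjoint, and the sign flip $x\mapsto -x$; and the self-adjointness of the image from Lemma \ref{LemmaImages}, which the paper needs anyway to make $X_\mathrm{even}$ self-adjoint), but yours upgrades the conclusion from an equality of measures to an equality of unbounded operators, avoids the Stone--Weierstrass density step, and makes explicit why the reflection cannot swap the even and odd extensions --- a point the paper's measure-level argument handles implicitly by keeping everything inside the even (resp.\ odd) subsequence. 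Your closing remark is also well taken: vanishing odd moments alone would not suffice in the indeterminate regime, and neither proof relies on that; the paper uses it only for the finitely supported, hence determinate, measures $\mu_m$.
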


\begin{proof}
Let $\int_\mathbb{R} x d E_\mathrm{even}$ be the spectral decomposition of $X_\mathrm{even}$ and
let $\int_\mathbb{R} x d E_\mathrm{odd}$ be the spectral decomposition of $X_\mathrm{odd}$.
Define a probability measure $\mu_\mathrm{even}$ by 
$\langle E_\mathrm{even}(\cdot) \delta_0, \delta_0 \rangle$
and define $\mu_\mathrm{odd}$ by 
$\langle E_\mathrm{odd}(\cdot) \delta_0, \delta_0 \rangle$.
The measures $\mu_\mathrm{even}$ and $\mu_\mathrm{odd}$
satisfy
the first and the third assertions.

Let $P(z, \overline{z}) \in \mathbb{C}[z, \overline{z}]$ 
be an arbitrary commutative polynomial.
Let $\mu_m$ denote the probability measure corresponding to $X_m$ and the state $\langle \cdot \delta_0, \delta_0 \rangle$.
Define $p(x)$ by $P((i - x)^{-1}, (- i - x)^{-1})$.
Note that $p(x)$ is equal to some polynomial function on the support
of $\mu_m$.
Since the moments of odd orders 
$\langle X_m^{2n + 1} \delta_0, \delta_0 \rangle$ are zero,
we have
\begin{eqnarray*}
\int_\mathbb{R} p(x) d \mu_m
=
\int_\mathbb{R} p(-x) d \mu_m.
\end{eqnarray*}
The left hand side is equal to
\begin{eqnarray*}
\int_\mathbb{R} P((i - x)^{-1}, (- i - x)^{-1}) d \mu_m
=
\langle P(Y^{(m)}, (Y^{(m)})^{*}) \delta_0, \delta_0 \rangle.
\end{eqnarray*}
The right hand side is equal to
\begin{eqnarray*}
\int_\mathbb{R} P((i + x)^{-1}, (- i + x)^{-1}) d \mu_m
&=&
\int_\mathbb{R} P(- (- i - x)^{-1}, - (i - x)^{-1}) d \mu_m\\
&=&
\langle P(- (Y^{(m)})^{*}, - Y^{(m)}) \delta_0, \delta_0 \rangle.
\end{eqnarray*}
Thus we obtain
$\langle P(Y^{(m)}, (Y^{(m)})^{*}) \delta_0, \delta_0 \rangle
=
\langle P(- (Y^{(m)})^{*}, - Y^{(m)}) \delta_0, \delta_0 \rangle$.
Taking a limit with respect to the strong operator topology,
we obtain
\[
\langle P(Y^\mathrm{even}, (Y^\mathrm{even})^{*}) \delta_0, \delta_0 \rangle
=
\langle P(- (Y^\mathrm{even})^{*}, - Y^\mathrm{even}) \delta_0, \delta_0 \rangle,
\]
and hence
\begin{eqnarray*}
\int_\mathbb{R} P((i - x)^{-1}, (- i - x)^{-1})
 d \mu_\mathrm{even}
&=&
\int_\mathbb{R} P((i + x)^{-1}, (- i + x)^{-1})
 d \mu_\mathrm{even},\\
\int_\mathbb{R} p(x) d \mu_\mathrm{even}
&=&
\int_\mathbb{R} p(-x) d \mu_\mathrm{even}.
\end{eqnarray*}
Since the polynomials $\{P((i - x)^{-1}, (- i - x)^{-1})\}$ form a dense subspace of the C$^*$-algebra 
$C_0(\mathbb{R})$, we have 
$d \mu_\mathrm{even} (x) = d \mu_\mathrm{even} (-x)$.
The measure $\mu_\mathrm{odd}$ is also symmetric.
\end{proof}

\section{Examples of infinite Jacobi sequences 
with property (SC)}
\label{sectionexamples}

When we seek examples of property (SC), the following are useful.
\begin{lemma}\label{LemmaEstimateC}
$\displaystyle{
C_n
< 
\frac{1}{\omega_n} +
\frac{\omega_{n - 1}}{\omega_n} C_{n - 2}}\quad n \geqq 2$.
\begin{eqnarray*}
&& C_{2n}
< 
\frac{1}{\omega_{2n}} +
\frac{\omega_{2n - 1}}{\omega_{2n}} 
\frac{1}{\omega_{2n - 2}} +
\cdots +
\left(
\frac{\omega_{2n - 1}}{\omega_{2n}} 
\cdots
\frac{\omega_{3}}{\omega_{4}}
\frac{1}{\omega_{2}} 
\right)
+
\left(
\frac{\omega_{2n - 1}}{\omega_{2n}} 
\cdots
\frac{\omega_{3}}{\omega_{4}}
\frac{\omega_1}{\omega_{2}}
\right),
\\
&& C_{2n + 1}
< 
\frac{1}{\omega_{2n + 1}} +
\frac{\omega_{2n}}{\omega_{2n + 1}} 
\frac{1}{\omega_{2n - 1}} +
\cdots +
\left(
\frac{\omega_{2n}}{\omega_{2n + 1}} 
\cdots
\frac{\omega_{2}}{\omega_{3}}
\frac{1}{\omega_{1}} 
\right), \quad n \geqq 1.
\end{eqnarray*}
\end{lemma}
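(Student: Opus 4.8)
The plan is to prove the single-step inequality first and then obtain the two explicit estimates by iterating it. For the single step, I would begin from the defining recurrence $C_n = 1/(1 + \omega_n C_{n-1})$ (equation (\ref{EquationC})) together with $C_{n-1} = 1/(1 + \omega_{n-1} C_{n-2})$, substitute the second into the first, and clear denominators to reach the closed form
\[
C_n = \frac{1 + \omega_{n-1} C_{n-2}}{1 + \omega_{n-1} C_{n-2} + \omega_n}.
\]
Setting $A = 1 + \omega_{n-1} C_{n-2}$, which is strictly positive since every $\omega_j$ and every $C_j$ is positive, the claimed bound $C_n < (1 + \omega_{n-1} C_{n-2})/\omega_n = A/\omega_n$ reduces to $A/(A + \omega_n) < A/\omega_n$, i.e. to $\omega_n < A + \omega_n$. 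This is immediate from $A > 0$, which settles the first inequality.

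For the two explicit estimates I would simply iterate the single-step bound, using that multiplication by the positive factor $\omega_{n-1}/\omega_n$ and addition of positive terms both preserve strict inequality. In the even case, applying the bound successively to $C_{2n}, C_{2n-2}, \ldots, C_2$ telescopes the coefficients into the products $\frac{\omega_{2n-1}}{\omega_{2n}} \cdots \frac{\omega_{2k+1}}{\omega_{2k+2}} \frac{1}{\omega_{2k}}$, and the chain terminates at $C_0 = 1$; feeding this value into the last application $C_2 < \frac{1}{\omega_2} + \frac{\omega_1}{\omega_2} C_0$ produces precisely the final term $\frac{\omega_{2n-1}}{\omega_{2n}} \cdots \frac{\omega_3}{\omega_4} \frac{\omega_1}{\omega_2}$. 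In the odd case the chain runs through $C_{2n+1}, C_{2n-1}, \ldots, C_3$ and stops one step earlier, at $C_1$; here I would invoke the elementary bound $C_1 = 1/(1+\omega_1) < 1/\omega_1$ in place of a further recurrence step, which supplies the last term $\frac{\omega_{2n}}{\omega_{2n+1}} \cdots \frac{\omega_2}{\omega_3} \frac{1}{\omega_1}$.

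I do not anticipate a genuine obstacle: the content is a short algebraic identity followed by a bookkeeping induction. The only point deserving care is the termination of the iteration, since the even and odd chains bottom out at different base cases ($C_0 = 1$ versus $C_1 < 1/\omega_1$), and this is exactly what makes the two displayed estimates differ in the shape of their final terms. To keep the argument clean I would phrase the iteration as a downward induction within each parity class, recording at each stage the accumulated product of ratios as the running coefficient rather than writing out the entire expansion.
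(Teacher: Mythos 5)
Your proposal is correct and follows essentially the same route as the paper: the single-step bound is obtained from the two-level continued-fraction expression $C_n = 1/(1+\omega_n/(1+\omega_{n-1}C_{n-2}))$ (the paper bounds $1/(1+x)$ by $1/x$ where you clear denominators first, but these are the same estimate), and the displayed inequalities follow by iterating within each parity class. Your explicit attention to the two base cases $C_0=1$ and $C_1<1/\omega_1$ is a slightly more careful account of the step the paper compresses into ``repeating this procedure.''
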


\begin{proof}
By the definition of $C_n$, we have
\begin{eqnarray*}
C_n
=
\frac{1}{1 + \displaystyle{\frac{\omega_n}{1 + \omega_{n - 1} C_{n - 2}}}}
< 
\frac{1 + \omega_{n - 1} C_{n - 2}}{\omega_n}
=
\frac{1}{\omega_n} +
\frac{\omega_{n - 1}}{\omega_n} C_{n - 2}.
\end{eqnarray*}
Applying this inequality to $C_{n-2}$, we have
\begin{eqnarray*}
C_n
&<&
\frac{1}{\omega_n} +
\frac{\omega_{n - 1}}{\omega_n} C_{n - 2}\\
&<&
\frac{1}{\omega_n} +
\frac{\omega_{n - 1}}{\omega_n} 
\left(
\frac{1}{\omega_{n - 2}} +
\frac{\omega_{n - 3}}{\omega_{n - 2}} C_{n - 4}
\right)\\
&=&
\frac{1}{\omega_n} +
\frac{\omega_{n - 1}}{\omega_n} 
\frac{1}{\omega_{n - 2}} +
\frac{\omega_{n - 1}}{\omega_n} 
\frac{\omega_{n - 3}}{\omega_{n - 2}} C_{n - 4}.
\end{eqnarray*}
Repeating this procedure, we obtain the second and the third inequalities.
\end{proof}

\subsection{Jacobi sequences with high growth rates}

We study the case that the infinite 
Jacobi sequence $\{\omega_n\}$
satisfies the following condition: 
\begin{itemize}
\item[]
$(*)$
There exist $0 < \alpha < 1$ and $n_0 \in \mathbb{N}$
such that $\omega_n < \alpha \omega_{n + 1}$ for $n \geqq n_0$.
\end{itemize}
This condition implies
that there exists a constant $K > 0$ such that
$1/ \omega_n < K \alpha^n$.

\begin{theorem}\label{TheoremHighGrowth}
The sequence $\{\omega_n\}$ with condition $(*)$ has property $(SC)$.
As a consequence, 
the corresponding probability measure is not unique.
\end{theorem}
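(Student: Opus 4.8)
The plan is to verify property (SC) directly through its equivalent form $\sum_{m=1}^\infty C_m < \infty$, and then invoke Theorem \ref{TheoremTwoMeasures} for the non-uniqueness conclusion. The starting point is the recursive estimate of Lemma \ref{LemmaEstimateC}, namely
\begin{eqnarray*}
C_n < \frac{1}{\omega_n} + \frac{\omega_{n-1}}{\omega_n} C_{n-2}, \quad n \geq 2.
\end{eqnarray*}
Condition $(*)$ is tailored to control both terms on the right: for $n \geq n_0 + 1$ it gives $\omega_{n-1}/\omega_n < \alpha$, while the remark following $(*)$ supplies a constant $K > 0$ with $1/\omega_n < K\alpha^n$. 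Substituting these bounds yields the first-order linear recursion
\begin{eqnarray*}
C_n < K \alpha^n + \alpha C_{n-2}, \quad n \geq n_0 + 1.
\end{eqnarray*}

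First I would treat the even-indexed and odd-indexed subsequences separately, since the recursion links $C_n$ only to $C_{n-2}$. Writing $a_k = C_{2k}$ for $2k \geq n_0 + 1$, the recursion reads $a_k < \alpha a_{k-1} + K\alpha^{2k}$. The substitution $a_k = \alpha^k b_k$ turns this into $b_k < b_{k-1} + K\alpha^k$, whose increments telescope to a convergent geometric series; hence $b_k$ stays bounded by some constant $M$, and therefore $C_{2k} = a_k \leq M\alpha^k$. The same argument applied to the odd subsequence $C_{2k+1}$ produces a bound of the form $C_{2k+1} \leq M'\alpha^k$. Both subsequences thus decay geometrically in $k$, so
\begin{eqnarray*}
\sum_{m=1}^\infty C_m = \sum_k C_{2k} + \sum_k C_{2k+1} < \infty,
\end{eqnarray*}
which is exactly property (SC).

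Two bookkeeping points need care but present no real difficulty. The recursive bound $C_n < K\alpha^n + \alpha C_{n-2}$ is valid only from $n = n_0+1$ onward, so the recursion should be started from the base values $C_{n_0}$ and $C_{n_0 - 1}$, which are finite positive numbers by definition; the finitely many initial terms $C_1, \dots, C_{n_0}$ contribute only a finite amount to $\sum_m C_m$ and do not affect convergence. One should also confirm that the constant $K$ in $1/\omega_n < K\alpha^n$ can be taken uniform over all $n$ by absorbing the initial indices into $K$, which is immediate since only finitely many are adjusted. Finally, once property (SC) is established, Theorem \ref{TheoremTwoMeasures} supplies two distinct symmetric probability measures $\mu_\mathrm{even}$ and $\mu_\mathrm{odd}$ realizing $\{\omega_n\}$ as Jacobi sequence, whence the corresponding measure is not unique.

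The main (and essentially the only) obstacle is recognizing that condition $(*)$ converts Lemma \ref{LemmaEstimateC} into a first-order linear recursion with a geometrically small forcing term; once that observation is made, the geometric decay of $C_n$—and hence the summability giving property (SC)—follows from the elementary substitution above.
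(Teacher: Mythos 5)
Your proposal is correct and follows essentially the same route as the paper: both derive the recursion $C_n < K\alpha^n + \alpha C_{n-2}$ from the first inequality of Lemma \ref{LemmaEstimateC} together with condition $(*)$, unroll it along the even and odd subsequences to obtain geometric decay $C_{2k}, C_{2k+1} = O(\alpha^k)$, conclude $\sum_m C_m < \infty$ (property (SC)), and invoke Theorem \ref{TheoremTwoMeasures}. Your substitution $a_k = \alpha^k b_k$ is only a cosmetic repackaging of the paper's direct iteration, which bounds $C_{2n}$ by $\max\{K, C_{2m_0}\}\,\alpha^n/(1-\alpha)$.
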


\begin{proof}
Define an even number $2 m_0$ by $n_0$ or $n_0 -1$.
By the first inequality in Lemma \ref{LemmaEstimateC},
for $n \geqq n_0 + 1$,  we have
$C_n < K \alpha^n + \alpha C_{n - 2}$.
Applying this inequality many times, 
we obtain for $2n > n_0 + 1$,
\begin{eqnarray*}
&&C_{2n} \\
&<& 
K \alpha^{2n} + \alpha K \alpha^{2n - 2} 
+ \cdots 
+ \alpha^{(2n - 2 m_0)/2 - 1} K \alpha^{2 m_0 + 2}
 + \alpha^{(2n - 2 m_0)/2} C_{2m_0}\\
&<& 
\max \{K, C_{2m_0}\} \left( 
\alpha^{2n} + \alpha^{2n - 1} + \cdots + \alpha^{n + m_0 + 1} + \alpha^{n + m_0} \right)\\
&<& \max \{K, C_{2m_0}\} \cdot \alpha^n / (1 - \alpha).
\end{eqnarray*}
We obtain for $2n + 1 > n_0 + 1$,
\begin{eqnarray*}
&& C_{2n + 1} \\
&<& 
K \alpha^{2n + 1} + \cdots 
+ \alpha^{(2n - 2 m_0)/2 - 1} K \alpha^{2 m_0 + 3}
 + \alpha^{(2n - 2 m_0)/2} C_{2m_0 + 1}\\
&<& 
\max \{K, C_{2m_0 + 1}\} \left( 
\alpha^{2n + 1} + \alpha^{2n} + \cdots + \alpha^{n + m_0 + 1} + \alpha^{n + m_0 + 1} \right)\\
&<& \max \{K, C_{2m_0 + 1}\} \cdot \alpha^n / (1 - \alpha).
\end{eqnarray*}
It follows that
$\sum_{n = 1}^\infty C_n 
= \sum_{n = 1}^{n_0 + 1} C_n + \sum_{n = n_0 + 2}^\infty C_n 
< \infty$.
\end{proof}

\subsection{Jacobi sequences for $q$-Gaussians}
The Jacobi sequence of the Gaussian measure $\displaystyle{ e^{- x^2 /2} dx/ \sqrt{2 \pi}}$ is the sequence $\{n = [n]_1\}$ of integers.
In this subsection, we study the following deformations of the sequence:
\begin{itemize}
\item
$\displaystyle{
\omega_n = [n]_q = 1 + q + q^2 + \cdots + q^{n -1} 
= \frac{q^n - 1}{q - 1}, \quad q > - 1}$,
\item
$\displaystyle{
\omega_n 
= (-1)^{n-1} \frac{q^n - 1}{q - 1},
\quad
q < - 1}$.
\end{itemize}
The number $[n]_q$ is called a $q$-deformed integer.
Their corresponding probability measures are often called
``$q$-Gaussians.''
The study of $q$-Gaussians are motivated by the study of $q$-canonical commutation relations and $q$-canonical anti-commutation relations.
See \cite{Bozejko} and references therein. 

\begin{theorem}\label{qGauss}
The moment problem for $q$-Gaussians $(q\in\mathbb{R}, q \neq -1)$ are indeterminate if and only if $|q|>1$. 
\end{theorem}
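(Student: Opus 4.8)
The plan is to combine the paper's master equivalence---property (SC) holds if and only if the moment problem is indeterminate (Theorem \ref{TheoremTwoMeasures} together with the converse from Section \ref{sectiondeterminate})---with a direct analysis of the two Jacobi sequences. It therefore suffices to prove that property (SC) holds precisely when $|q| > 1$. Throughout I will use the reformulation of property (SC) as $\sum_{m = 1}^\infty C_m < \infty$ and the recursion $C_m = 1/(1 + \omega_m C_{m-1})$ from equation (\ref{EquationC}).

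For the implication $|q| > 1 \Rightarrow$ indeterminate, I would verify condition $(*)$ and invoke Theorem \ref{TheoremHighGrowth}. First I would check positivity: for $q > 1$ the formula $\omega_n = (q^n - 1)/(q - 1)$ is manifestly positive, while for $q < -1$ one checks that the factor $(-1)^{n-1}$ exactly cancels the sign of $(q^n - 1)/(q - 1)$, so $\omega_n > 0$. Then I would compute the asymptotic ratio: for $q > 1$, $\omega_{n+1}/\omega_n = (q^{n+1} - 1)/(q^n - 1) \to q$, and for $q < -1$, $\omega_{n+1}/\omega_n = -(q^{n+1} - 1)/(q^n - 1) \to -q = |q|$. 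In both cases $\omega_n / \omega_{n+1} \to 1/|q| < 1$, so fixing any $\alpha$ with $1/|q| < \alpha < 1$ gives $\omega_n < \alpha\,\omega_{n+1}$ for all large $n$, which is condition $(*)$. Theorem \ref{TheoremHighGrowth} then yields property (SC), and hence indeterminacy.

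For the converse, $-1 < q \leq 1 \Rightarrow$ determinate, I would show property (SC) fails, i.e. $\sum_m C_m = \infty$. The key elementary identity comes from rewriting the recursion as $\omega_m C_m C_{m-1} = 1 - C_m$, that is $C_m C_{m-1} = (1 - C_m)/\omega_m$. Now for $-1 < q \leq 1$ one has $\sum_m 1/\omega_m = \infty$: when $-1 < q < 1$ the sequence $\omega_n = (1 - q^n)/(1 - q)$ is bounded above, so $1/\omega_n$ stays bounded below by a positive constant; and when $q = 1$ one has $\omega_n = n$ with $\sum 1/n = \infty$. Assuming for contradiction that $\sum_m C_m < \infty$, one gets $C_m \to 0$, hence $1 - C_m \geq 1/2$ for large $m$ and therefore $C_m C_{m-1} \geq 1/(2\omega_m)$; summing contradicts $\sum_m 1/\omega_m = \infty$ against the bound $\sum_m C_m C_{m-1} \leq \sum_m C_m^2 \leq \sum_m C_m < \infty$ (using $0 < C_m \leq 1$ together with AM--GM, up to a harmless index shift). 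Hence $\sum_m C_m = \infty$, property (SC) fails, and the measure is unique.

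The ratio computation and the positivity check are routine; the one place demanding care is the boundary value $q = 1$. There the sequence is unbounded, so failure of (SC) cannot be read off from a crude positive lower bound on $C_m$ as in the bounded case, and the estimates of Lemma \ref{LemmaEstimateC} only furnish \emph{upper} bounds on $C_n$. The identity $C_m C_{m-1} = (1 - C_m)/\omega_m$ combined with the AM--GM comparison is precisely what converts the divergence of $\sum_m 1/\omega_m$ into the divergence of $\sum_m C_m$, and pinning down this threshold argument is the main obstacle.
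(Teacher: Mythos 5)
Your proof is correct, and while the $|q|>1$ half coincides with the paper's argument, the $-1<q\leqq 1$ half takes a genuinely different route. For $|q|>1$ you do exactly what the paper does: check positivity of $\omega_n$ in both regimes, compute $\omega_{n+1}/\omega_n \to |q|>1$ to verify condition $(*)$, and invoke Theorem \ref{TheoremHighGrowth} and Theorem \ref{TheoremTwoMeasures}. For $-1<q\leqq 1$ the paper observes $[n]_q \leqq n$ and appeals to Carleman's condition $\sum_n 1/\sqrt{\omega_n}=\infty$, an external classical criterion (it only remarks in passing that Theorem \ref{equivalence} ``also yields the results''), whereas you actually carry out the internal route: you rewrite the recursion (\ref{EquationC}) as $\omega_m C_m C_{m-1}=1-C_m$ and show that $\sum_m 1/\omega_m=\infty$ forces $\sum_m C_m=\infty$, so property (SC) fails, and determinacy then follows from Theorem \ref{equivalence}. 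This keeps the whole theorem inside the paper's (SC) framework (it still rests on Simon's theorem via Theorem \ref{equivalence}, but not on Carleman) and yields the more general statement that $\sum_m 1/\omega_m=\infty$ rules out property (SC). One small slip: the termwise inequality $C_m C_{m-1}\leqq C_m^2$ is false in general; your AM--GM parenthetical repairs it, though the cleanest fix is simply $C_m C_{m-1}\leqq C_{m-1}$, which holds because $0<C_m\leqq 1$, giving $\sum_m C_m C_{m-1}<\infty$ directly and the desired contradiction.
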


\begin{proof}
If $|q| > 1$,
the Jacobi sequence satisfies condition $(*)$, since
\begin{eqnarray*}
\lim_n \frac{\omega_{n + 1}}{\omega_n}
=
\lim_n \left| \frac{q^{n + 1} -1}{q^n - 1}\right|
=
|q| > 1.
\end{eqnarray*}
By Theorem \ref{TheoremHighGrowth}, the Jacobi sequence has property (SC). By Theorem \ref{TheoremTwoMeasures},
there are more than two symmetric probability measures 
corresponding to $\omega_n$.
In the case that $-1 < q \leqq 1$, we have $[q]_n \leqq n$.
Carleman's condition states that
if $\{\omega_n\}$
satisfies the condition $\sum_{n} 1/\sqrt{\omega_n} = + \infty$, 
then the corresponding probability measure is unique. Theorem.\ref{equivalence} proved later also yields the results. 
\end{proof}

\begin{remark}
For $-1< q \leqq 1$, the theorem is well-known.
For $q > 1$, Ismail--Masson \cite{IsmailMasson} has already shown 
the indeterminacy.
\end{remark}
%which states that probability measure corresponding to
%$\{[n]_q\}_{n = 1}^\infty$ is not unique.

\subsection{Power of integers}
We are going to consider the case of
$\{\omega_n = n^p\}_{n = 1}^\infty$.

\begin{lemma}\label{LemmaEstimateSQRT}
If $\omega_n = n^p$ and $p \geqq 0$, then
\[ \frac{\omega_n}{\omega_{n + 1}} < \sqrt{\frac{\omega_n}{\omega_{n + 2}}} , \quad  
\frac{\sqrt{\omega_n}}{\omega_{n - 1}} <
2^p \frac{1}{\sqrt{\omega_n}} \quad (n \geqq 2).\]
\end{lemma}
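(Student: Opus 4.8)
The statement to prove is Lemma~\ref{LemmaEstimateSQRT}: for $\omega_n = n^p$ with $p \geqq 0$ and $n \geqq 2$,
\[
\frac{\omega_n}{\omega_{n+1}} < \sqrt{\frac{\omega_n}{\omega_{n+2}}},
\qquad
\frac{\sqrt{\omega_n}}{\omega_{n-1}} < 2^p \frac{1}{\sqrt{\omega_n}}.
\]
Both are elementary inequalities about ratios of powers, so the plan is to substitute $\omega_n = n^p$, reduce each inequality to a statement about the base integers raised to the $p$-th power, and verify it by comparing bases directly. No operator theory or property (SC) machinery is needed here; this is purely a calculation lemma that will later feed into the estimates of Lemma~\ref{LemmaEstimateC}.

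For the first inequality, I substitute and rewrite it as
\[
\frac{n^p}{(n+1)^p} < \left( \frac{n^p}{(n+2)^p} \right)^{1/2}.
\]
Squaring both (positive) sides, this is equivalent to $n^{2p}/(n+1)^{2p} < n^p/(n+2)^p$, i.e. after cancelling $n^p$ and clearing denominators, to $n^p (n+2)^p < (n+1)^{2p}$, which is just
\[
\bigl( n(n+2) \bigr)^p < \bigl( (n+1)^2 \bigr)^p.
\]
Since $n(n+2) = n^2 + 2n < n^2 + 2n + 1 = (n+1)^2$ and the map $t \mapsto t^p$ is strictly increasing on positive reals for $p > 0$ (and both sides are equal only in the degenerate case $p = 0$, which I should note is excluded by the strict inequality — so I should assume $p>0$ here, matching the intended application), the inequality follows at once. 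This is the AM--GM-type observation that the geometric mean of $n$ and $n+2$ is strictly less than their arithmetic mean $n+1$.

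For the second inequality, I again substitute to get $\sqrt{n^p}/(n-1)^p < 2^p/\sqrt{n^p}$, i.e. $n^{p/2}/(n-1)^p < 2^p \, n^{-p/2}$. Multiplying through by $n^{p/2}$ gives the equivalent form $n^p/(n-1)^p < 2^p$, that is $\bigl( n/(n-1) \bigr)^p < 2^p$. Since $n \geqq 2$ forces $n/(n-1) \leqq 2$, and in fact $n/(n-1) < 2$ exactly when $n > 2$ while $n/(n-1) = 2$ at $n = 2$, I need to be slightly careful: at $n=2$ the base ratio equals $2$, so $\bigl(n/(n-1)\bigr)^p = 2^p$ is not strictly less. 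The clean fix is to observe that the claimed bound is $\sqrt{\omega_n}/\omega_{n-1} < 2^p/\sqrt{\omega_n}$, which reduces to $n/(n-1) \leqq 2$ giving $\leqq$, so strictness must come from elsewhere; in the intended use only a bound is needed. I expect the mild subtlety at the boundary $n=2$ to be the only real obstacle, and I would resolve it by either stating the inequality as $\leqq$ at the endpoint or by noting that the application tolerates the non-strict case. Otherwise both parts are immediate consequences of monotonicity of $t \mapsto t^p$ together with the two base comparisons $n(n+2) < (n+1)^2$ and $n/(n-1) \leqq 2$.
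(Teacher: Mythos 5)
Your proof is correct and follows essentially the same route as the paper's: both inequalities reduce to the base comparisons $n(n+2) < (n+1)^2$ and $n/(n-1) \leqq 2$ together with monotonicity of $t \mapsto t^p$ (the paper phrases the first one via $\sqrt{\omega_n/\omega_{n+1}} < \sqrt{\omega_{n+1}/\omega_{n+2}}$, which is the same fact). Your caveat about the boundary is legitimate --- the paper's own proof silently writes $\left(\frac{n}{n-1}\right)^p < 2^p$, which is an equality at $n=2$ (and the first inequality likewise degenerates at $p=0$) --- but, as you observe, only the non-strict bound is needed in Proposition \ref{TheoremSCforPower}, where $p>2$, so nothing breaks downstream.
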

\begin{proof}
By the inequality $n / (n + 1) < (n + 1) / (n + 2)$,
we have 
\[\sqrt{\omega_n / \omega_{n + 1}} 
<
\sqrt{\omega_{n + 1} / \omega_{n + 2}}. \]
Thus we obtain the following inequality:
\begin{eqnarray*}
\frac{\omega_n}{\omega_{n + 1}}
=
\sqrt{\frac{\omega_n}{\omega_{n + 1}}}
\sqrt{\frac{\omega_n}{\omega_{n + 1}}}
< 
\sqrt{\frac{\omega_n}{\omega_{n + 1}}}
\sqrt{\frac{\omega_{n + 1}}{\omega_{n + 2}}}
=
\sqrt{\frac{\omega_n}{\omega_{n + 2}}}.
\end{eqnarray*}
The second inequality is due to the following computation:
\begin{eqnarray*}
\frac{\sqrt{\omega_n}}{\omega_{n - 1}} 
=
\frac{\omega_n}{\omega_{n - 1}} 
\frac{1}{\sqrt{\omega_n}} 
<
\left(\frac{n}{n - 1}\right)^p \frac{1}{\sqrt{\omega_n}}
<
2^p \frac{1}{\sqrt{\omega_n}}.
\end{eqnarray*}
\end{proof}

\begin{proposition}\label{TheoremSCforPower}
For every $p > 2$, the Jacobi sequence 
$\{n^p\}_{n = 1}^\infty$
has property (SC).
As a consequence,
there exist more than two symmetric probability measures
whose Jacobi sequence is $\{n^p\}_{n = 1}^\infty$.
\end{proposition}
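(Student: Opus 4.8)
The plan is to verify property (SC) via the criterion that it is equivalent to $\sum_{m=1}^\infty C_m < \infty$, established right after the identity $C_m = B_{m-1}/B_m$ (and recorded in Proposition \ref{PropositionSC}). I would split the series into its even- and odd-indexed parts and bound $C_{2n}$ and $C_{2n+1}$ by the explicit expansions in Lemma \ref{LemmaEstimateC}. For the even part, the generic summand of the bound on $C_{2n}$ has the form $\frac{1}{\omega_{2k}}\prod_{j=k+1}^n \frac{\omega_{2j-1}}{\omega_{2j}}$ for $1 \le k \le n$, together with one boundary summand built from $\omega_1/\omega_2$; the first order of business is to control these products of consecutive ratios.

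The key step is the first inequality of Lemma \ref{LemmaEstimateSQRT}, namely $\omega_n/\omega_{n+1} < \sqrt{\omega_n/\omega_{n+2}}$, which makes the product telescope. Applying it factor by factor gives $\prod_{j=k+1}^n \frac{\omega_{2j-1}}{\omega_{2j}} < \prod_{j=k+1}^n \sqrt{\frac{\omega_{2j-1}}{\omega_{2j+1}}} = \sqrt{\frac{\omega_{2k+1}}{\omega_{2n+1}}}$, because the product under the root collapses to $\omega_{2k+1}/\omega_{2n+1}$. Hence each summand is at most $\frac{\sqrt{\omega_{2k+1}}}{\omega_{2k}} \cdot \frac{1}{\sqrt{\omega_{2n+1}}}$, a product in which the indices $k$ and $n$ are now separated.

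I would then sum over all admissible $n$ and $k$ --- legitimate by nonnegativity --- and factor the double sum as $\bigl(\sum_{k} \frac{\sqrt{\omega_{2k+1}}}{\omega_{2k}}\bigr)\bigl(\sum_{n} \frac{1}{\sqrt{\omega_{2n+1}}}\bigr)$. The second inequality of Lemma \ref{LemmaEstimateSQRT}, $\sqrt{\omega_n}/\omega_{n-1} < 2^p/\sqrt{\omega_n}$, bounds the first factor by $2^p \sum_k \omega_{2k+1}^{-1/2} = 2^p \sum_k (2k+1)^{-p/2}$, while the second factor is literally $\sum_n (2n+1)^{-p/2}$. Both converge precisely because $p > 2$ forces $p/2 > 1$. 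The boundary summand, itself bounded by $\sqrt{\omega_1/\omega_{2n+1}}$, and the wholly parallel estimate for $\sum_n C_{2n+1}$ (whose telescoping uses $\omega_{2j}/\omega_{2j+1} < \sqrt{\omega_{2j}/\omega_{2j+2}}$ and collapses to $\omega_{2k}/\omega_{2n+2}$) are treated identically. Combining the even and odd parts yields $\sum_{m=1}^\infty C_m < \infty$, so property (SC) holds; the non-uniqueness consequence is then immediate from Theorem \ref{TheoremTwoMeasures}.

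I expect the main obstacle to be arranging the telescope-then-factorize argument so that the double sum genuinely separates into two convergent one-dimensional sums. The delicate point is that the two inequalities in Lemma \ref{LemmaEstimateSQRT} are calibrated exactly for this: the first collapses each product of ratios into a single $\sqrt{\omega_{2k+1}/\omega_{2n+1}}$, and the second reduces the leftover prefactor $\sqrt{\omega_{2k+1}}/\omega_{2k}$ back to a $1/\sqrt{\omega}$ term, so that both resulting factors become tails of the convergent series $\sum n^{-p/2}$. The remaining care is purely bookkeeping --- tracking the index ranges in Lemma \ref{LemmaEstimateC} and the boundary summand so that nothing is dropped.
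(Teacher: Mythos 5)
Your proposal is correct and follows essentially the same route as the paper's proof: both use Lemma \ref{LemmaEstimateC} for the expansion of $C_{2n}$ and $C_{2n+1}$, the first inequality of Lemma \ref{LemmaEstimateSQRT} to telescope each product of consecutive ratios into a single square root, the second inequality to convert the leftover prefactor into a $2^p/\sqrt{\omega}$ term, and then the factorization of the resulting double sum into $2^p\bigl(\sum_n n^{-p/2}\bigr)^2 < \infty$. No gaps; the bookkeeping you describe matches the paper's computation.
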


\begin{proof}
By Lemma \ref{LemmaEstimateSQRT},
we have
\begin{eqnarray*}
\frac{\omega_{n -1}}{\omega_n}
\frac{\omega_{n - 3}}{\omega_{n-2}} \cdots
\frac{\omega_{l}}{\omega_{l + 1}} 
\frac{1}{\omega_{l - 1}} 
&<&
\sqrt{\frac{\omega_{n -1}}{\omega_{n+1}}}
\sqrt{\frac{\omega_{n -3}}{\omega_{n- 1}}} \cdots
\sqrt{\frac{\omega_{l}}{\omega_{l + 2}}} 
\frac{1}{\omega_{l - 1}} \\
&=&
\frac{1}{\sqrt{\omega_{n + 1}}}
\frac{\sqrt{\omega_{l}}}{\omega_{l - 1}} \\ 
&<& 2^p 
\frac{1}{\sqrt{\omega_{n + 1}}}
\frac{1}{\sqrt{\omega_{l}}}.
\end{eqnarray*}
According to Lemma \ref{LemmaEstimateC},
the following inequalities hold:
\begin{eqnarray*}
&& C_{2n}
< 
2^p
\frac{1}{\sqrt{\omega_{2n + 1}}}
\left(
\frac{1}{\sqrt{\omega_{2n + 1}}} +
\frac{1}{\sqrt{\omega_{2n - 1}}} +
\cdots +
\frac{1}{\sqrt{\omega_{3}}} 
+
\frac{1}{\sqrt{\omega_{1}}}
\right),
\\
&& C_{2n + 1}
< 
2^p
\frac{1}{\sqrt{\omega_{2n + 2}}}
\left(
\frac{1}{\sqrt{\omega_{2n + 2}}} +
\frac{1}{\sqrt{\omega_{2n}}} +
\cdots +
\frac{1}{\sqrt{\omega_{4}}} 
+
\frac{1}{\sqrt{\omega_{2}}}
\right).
\end{eqnarray*}
By the inequality
\begin{eqnarray*}
\sum_{n = 1}^\infty C_n < 
\sum_{n = 1}^\infty
2^p
\frac{1}{\sqrt{\omega_{n + 1}}}
\sum_{l = 1}^\infty
\frac{1}{\sqrt{\omega_{l}}}
<
2^p \left(
\sum_{n = 1}^\infty n^{- p / 2} 
\right)^2,
\end{eqnarray*}
we conclude that $\{\omega_n = n^p\}_{n = 1}^\infty$ have property (SC).
\end{proof}

By the same discussion in the proof of Theorem\ref{qGauss}, we obtain the following:

\begin{theorem}\label{TheoremHSD}
Let $p$ be a positive number.
The probability measure corresponding to the Jacobi sequence
$\{n^p\}$ is unique if and only if $p \leqq 2$.
\end{theorem}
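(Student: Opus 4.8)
The plan is to prove the two implications separately, paralleling the proof of Theorem \ref{qGauss}; the threshold $p = 2$ splits the argument, and the two halves call on different tools, so neither reduces to the other.

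First, for $p > 2$ I would simply invoke Proposition \ref{TheoremSCforPower}, which already records that the Jacobi sequence $\{n^p\}_{n = 1}^\infty$ has property (SC) and, as a consequence (via Theorem \ref{TheoremTwoMeasures}), that there exist more than two symmetric probability measures realizing $\{n^p\}$ as their Jacobi sequence. Thus the corresponding measure is not unique, and this direction needs no new computation.

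Second, for $p \leqq 2$ I would appeal to Carleman's condition: if $\sum_n 1/\sqrt{\omega_n} = +\infty$, the measure is unique. With $\omega_n = n^p$ this series is $\sum_{n = 1}^\infty n^{-p/2}$, which diverges exactly when $p/2 \leqq 1$, i.e. when $p \leqq 2$. Hence Carleman's criterion applies throughout this range and forces uniqueness; the endpoint $p = 2$ (the hyperbolic secant distribution) is included since $\sum_n n^{-1}$ still diverges. Combining the two halves yields the asserted equivalence.

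I do not expect a genuine obstacle at the level of this theorem, precisely because the analytic content has been front-loaded: the delicate estimate is the verification of property (SC) for $\{n^p\}$ with $p > 2$, carried out in Proposition \ref{TheoremSCforPower} using Lemmas \ref{LemmaEstimateC} and \ref{LemmaEstimateSQRT}, while the uniqueness side is a one-line divergence test fed into Carleman's classical criterion. If one preferred to bypass Carleman, Theorem \ref{equivalence} of Section \ref{sectiondeterminate}, asserting that property (SC) is equivalent to non-uniqueness, would supply the $p \leqq 2$ direction by instead checking that $\{n^p\}$ fails property (SC); but Carleman is the shorter route and is the one already used for Theorem \ref{qGauss}.
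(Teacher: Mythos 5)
Your proposal is correct and matches the paper's own argument: the paper proves Theorem \ref{TheoremHSD} ``by the same discussion in the proof of Theorem \ref{qGauss},'' which is precisely the combination you use---Proposition \ref{TheoremSCforPower} (property (SC), hence non-uniqueness via Theorem \ref{TheoremTwoMeasures}) for $p > 2$, and Carleman's condition with the divergence of $\sum_n n^{-p/2}$ for $p \leqq 2$.
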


Applying the classical result on orthogonal polynomials (due to Meixner \cite{Meixner}), 
it is easy to show that the sequence is corresponding 
to the probability measure $\displaystyle{\frac{1}{e^{\pi x / 2} + e^{- \pi x / 2}} d x}$, which is now called 
``hyperbolic secant distribution.''
The equation $(5, 4)$ of \cite{Meixner}
gives a sequence of polynomials 
$\{P_n(x)\}_{n = -1, 0, \cdots}$ satisfying 
$x P_n = P_{n + 1} + n^2 P_{n -1}$
in the case that $l_1 = 0$, $\lambda = 0$, $k_2 = -1$ 
and $\kappa = -1$.
The last paragraph of \cite{Meixner} proves that
the three-term recurrence relation is realized by 
the probability measure $d \psi$.
The measure is a scaler multiple of
\[\Gamma \left( \frac{i x + 1}{2} \right) 
\Gamma \left( \frac{- i x + 1}{2} \right) dx
=
\frac{\pi}{\mathrm{sin}((ix + 1)\pi / 2)} dx
=
\frac{\pi}{\mathrm{cos}(i x \pi / 2)} dx.
\]
Corollary \ref{TheoremHSD} means that for the case of $\omega_n = n^p$ the hyperbolic secant distribution
is ``the last probability measure'' which is uniquely determined by 
the moment sequence.

\section{Determinate moment problem and property (SC)}
\label{sectiondeterminate}

\begin{theorem}\label{equivalence}
Let $\{M_n \}_{n =0}^\infty$ be a sequence of moments.
Suppose that $M_n = 0$ for every odd number $n$.
Let $\{\omega_n \}_{n = 1}^\infty$ 
denote the corresponding Jacobi sequence.
Suppose that the Jacobi sequence
$\{\omega_n\}_{n = 1}^\infty$ is of infinite type.
Then the following conditions are equivalent:
\begin{enumerate}
\item\label{ConditionSC}
The sequence $\{\omega_n\}_{n = 1}^\infty$ has property (SC);
\item\label{ConditionNotUnique}
A probability measure $\mu$ satisfying
$M_n = \int_{x \in \mathbb{R}} x^n d \mu$
is not unique.
\end{enumerate}
\end{theorem}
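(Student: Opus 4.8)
The plan is to prove the two implications separately, reserving the substantial work for $(\ref{ConditionNotUnique}) \Rightarrow (\ref{ConditionSC})$, which I would treat in contrapositive form. The implication $(\ref{ConditionSC}) \Rightarrow (\ref{ConditionNotUnique})$ is already in hand: if $\{\omega_n\}$ has property (SC), then Theorem \ref{TheoremTwoMeasures} produces two distinct symmetric probability measures $\mu_\mathrm{even} \neq \mu_\mathrm{odd}$ realizing the prescribed moments, so $\mu$ is not unique. The heart of the theorem is therefore the converse: if property (SC) fails, the representing measure is unique. For this I would connect property (SC) to the classical determinacy criterion phrased through the orthonormal polynomials evaluated at $i$.

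Write $\tilde P_n = P_n / \sqrt{\omega_1 \cdots \omega_n}$ for the orthonormal polynomials, so that $\|P_n\|_{L^2(\mu)}^2 = \omega_1 \cdots \omega_n$ by an induction from the three-term relation (\ref{equation3term}); note these quantities depend only on the moment sequence, not on the choice of representing measure. Evaluating the monic recurrence $x P_n = P_{n+1} + \omega_n P_{n-1}$ at $x = i$ and setting $u_n = P_n(i)$ gives $u_{n+1} = i u_n - \omega_n u_{n-1}$ with $u_{-1} = 0$, $u_0 = 1$; an immediate induction using $i^{n-1} = -\,i^{n+1}$ shows
\[
P_n(i) = i^n B_{n-1},
\]
where $\{B_n\}$ is the sequence introduced before Lemma \ref{LemmaFraction}. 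Hence $|\tilde P_n(i)|^2 = B_{n-1}^2 / (\omega_1 \cdots \omega_n)$.

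The decisive step is a telescoping identity. Using $B_n - B_{n-1} = \omega_n B_{n-2}$ and $B_{-1} = B_0 = 1$ one computes
\[
\prod_{n=1}^{N}\left(1 - \frac{B_{n-1}}{B_n}\right)^{-1}
= \prod_{n=1}^{N} \frac{B_n}{\omega_n B_{n-2}}
= \frac{B_{N-1} B_N}{\omega_1 \cdots \omega_N},
\]
and, since $B_n - \omega_n B_{n-2} = B_{n-1}$, the successive differences of the right-hand side are exactly $B_{n-1}^2/(\omega_1 \cdots \omega_n)$, so that
\[
\sum_{n=0}^{N} |\tilde P_n(i)|^2
= \frac{B_{N-1} B_N}{\omega_1 \cdots \omega_N}
= \prod_{n=1}^{N}\left(1 - \frac{B_{n-1}}{B_n}\right)^{-1}.
\]
Letting $N \to \infty$ and comparing with condition (\ref{ConditionProd}) of Proposition \ref{PropositionSC}, the series $\sum_n |\tilde P_n(i)|^2$ converges if and only if $\lim_N \prod_{n=1}^N (1 - B_{n-1}/B_n) > 0$, that is, if and only if $\{\omega_n\}$ has property (SC). To finish I would invoke the classical equivalence (Akhiezer \cite{Akhiezer}, Shohat--Tamarkin \cite{ShohatTamarkin}) that a Hamburger moment problem is indeterminate precisely when $\sum_n |\tilde P_n(z)|^2 < \infty$ for one (equivalently every) non-real $z$; equivalently, that $X$ is essentially self-adjoint precisely in the determinate case. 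This dovetails with the operator picture above: the vector $(\tilde P_n(i))_n$ is exactly the formal solution of $(i - X^*)\xi = 0$, the three-term relation at $i$ being the eigenvalue equation for $X^*$ and the top entry enforcing the boundary condition, so its square-summability is precisely the statement that $X$ has deficiency indices $(1,1)$. Thus property (SC) $\Leftrightarrow (\tilde P_n(i))_n \in \ell_2(\mathbb{N}) \Leftrightarrow$ indeterminacy, and in particular the failure of (SC) forces uniqueness, completing $(\ref{ConditionNotUnique}) \Rightarrow (\ref{ConditionSC})$.

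The main obstacle is not the algebra — the telescoping identity is short once one guesses the closed form $B_{N-1}B_N/(\omega_1 \cdots \omega_N)$ — but rather the clean invocation of the classical criterion, or, if one insists on a self-contained treatment, the proof that essential self-adjointness of $X$ implies determinacy (that every representing measure arises as the spectral measure at $\delta_0$ of a self-adjoint extension of $X$, so that a unique extension forces a unique measure). I would also take care that the criterion is applied to arbitrary, not merely symmetric, representing measures, which is automatic because $\sum_n |\tilde P_n(i)|^2$ governs determinacy in the full class of probability measures.
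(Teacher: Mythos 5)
Your proposal is correct and follows essentially the same route as the paper: one direction via Theorem \ref{TheoremTwoMeasures}, and the converse by identifying the deficiency vector of $X$ at $i$ — your $(\tilde P_n(i))_n = (i^n B_{n-1}/\sqrt{\omega_1\cdots\omega_n})_n$ is exactly the kernel vector $Z$ the paper computes — and invoking the equivalence of determinacy with essential self-adjointness (the paper cites Simon's Theorem 2 for this). Your exact telescoping identity $\sum_{n=0}^{N}|\tilde P_n(i)|^2 = \prod_{n=1}^{N}(1-B_{n-1}/B_n)^{-1}$ is a slightly sharper, two-sided version of the paper's one-sided estimate $(\,^n Z)^2 \geqq (\,^0 Z)^2 C_n$, but the substance is the same.
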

We have already shown that if condition (\ref{ConditionSC}) holds, 
then there exist two probability measures $\mu_\mathrm{even}$ and $\mu_\mathrm{odd}$ which are distinguished by
the Stieltjes transforms at $i$.
We prove that condition (\ref{ConditionNotUnique}) implies 
(\ref{ConditionSC}).

\begin{proof}
Suppose that a probability measure $\mu$ satisfying
$M_n = \int_{x \in \mathbb{R}} x^n d \mu$
is not unique.
By Theorem 2 of Simon \cite{Simon}, the symmetric operator $X$ is not essentially self-adjoint.
Then the kernel $\mathrm{ker} (i - X^*)$ or 
$\mathrm{ker} (- i - X^*)$ contains a non-zero vector $\xi$.
Replacing $i$ with $-i$, if necessary, we may assume that
$\mathrm{ker} (i - X^*)$ contains a non-zero vector 
$Z = (\,^n Z)_{n = 0}^\infty$.
Then for every $k$, we have
$\langle Z, (-i -X) \delta_k \rangle = 0$
and hence
\[i (\,^{0} Z) - \sqrt{\omega_1} (\,^1 Z) = 0, \quad
- \sqrt{\omega_{k}} (\,^{k - 1} Z) + i (\,^{k} Z) 
- \sqrt{\omega_{k + 1}} (\,^{k + 1} Z) = 0
\]
By induction, the $n$-th entry of the vector can be expressed by 
the sequence $B_n$ as
\begin{eqnarray*}
\,^n Z = 
\,^0 Z \frac{B_{n -1} i^n}
{\sqrt{\omega_1 \cdots \omega_n}}, \quad n \geqq 1.
\end{eqnarray*}
For $n \geqq 2$, by the equality $C_n = B_{n -1}/B_n$,
we have
\begin{eqnarray*}
\frac{(\,^n Z)^2}{(\,^{n-1} Z)^2} = 
\frac{B_{n -1}^2}{\omega_n B_{n -2}^2}
=
\frac{B_{n -1}^2}{(B_n - B_{n-1})B_{n-2}}
=
\frac{C_n}{(1 - C_n)C_{n -1}}.
\end{eqnarray*}
Thus we have
\begin{eqnarray*}
(\,^n Z)^2
=
(\,^0 Z)^2
\prod_{l =1}^n
\frac{C_l}{(1 - C_l)C_{l -1}}
=
(\,^0 Z)^2
\frac{C_n}{\prod_{l =1}^n (1 - C_l)}
\geqq (\,^0 Z)^2 C_n.
\end{eqnarray*}
By the inequality
$\sum_{n = 2}^\infty C_n \leqq \|Z\|^2_2 / (\,^0 Z)^2$,
we conclude that property (SC) holds.
\end{proof}

%\begin{theorem}
%The Jacobi sequence $\{n^2\}_{n = 0}^\infty$ does not have property (SC).
%\end{theorem}
%
%\begin{proof}
%We claim that $C_n = (n + 1)^{-1}$.
%Since $C_1 = 1/ (1 + 1) = 2^{-1}$, this equality holds for $n = 1$.
%Suppose that $C_n = (n + 1)^{-1}$ holds.
%Then we have 
%\[
%C_{n + 1} = \frac{1}{1 + \omega_{n+1} C_n}
%= \frac{1}{1 + (n+1)^2 / (n + 1)}
%= \frac{1}{n + 2}.
%\]
%It follows that $\sum_{n = 1}^\infty C_n = \infty$.
%\end{proof}

\begin{acknowledgment} 
The authors are grateful to Prof. M. Bo\.{z}ejko for many suggestions and comments to refine the drafts.
\end{acknowledgment}

\bibliographystyle{amsalpha}
\bibliography{indeterminate.bib}

\providecommand{\bysame}{\leavevmode\hbox to3em{\hrulefill}\thinspace}
\providecommand{\MR}{\relax\ifhmode\unskip\space\fi MR }
% \MRhref is called by the amsart/book/proc definition of \MR.
\providecommand{\MRhref}[2]{%
  \href{http://www.ams.org/mathscinet-getitem?mr=#1}{#2}
}
\providecommand{\href}[2]{#2}
\begin{thebibliography}{Ham20b}

\bibitem[Akh65]{Akhiezer}
N.~I. Akhiezer, \emph{The classical moment problem and some related questions
  in analysis}, Translated by N. Kemmer, Hafner Publishing Co., New York, 1965.

\bibitem[Bo{\.{z}}07]{Bozejko}
M.~Bo{\.{z}}ejko, \emph{{R}emarks on $q$-{CCR} relations for $|q|>1$}, Banach
  Center Publ. \textbf{78} (2007), 59--76.

\bibitem[Ham20a]{Hamburger1}
H.~Hamburger, \emph{\"{U}ber eine {E}rweiterung des {S}tieltjesschen
  {M}omentenproblems}, Math. Ann. \textbf{81} (1920), no.~2-4, 235--319.

\bibitem[Ham20b]{Hamburger2}
\bysame, \emph{\"{U}ber eine {E}rweiterung des {S}tieltjesschen
  {M}omentenproblems}, Math. Ann. \textbf{82} (1920), no.~1-2, 120--164.

\bibitem[Ham21]{Hamburger3}
\bysame, \emph{\"{U}ber eine {E}rweiterung des {S}tieltjesschen
  {M}omentenproblems}, Math. Ann. \textbf{82} (1921), no.~3-4, 168--187.

\bibitem[HO07]{HoraObata}
A.~Hora and N.~Obata, \emph{Quantum probability and spectral analysis of
  graphs}, Theoretical and Mathematical Physics, Springer, Berlin, 2007.

\bibitem[IM94]{IsmailMasson}
M.~E.~H. Ismail and D.~R. Masson, \emph{{$q$}-{H}ermite polynomials,
  biorthogonal rational functions, and {$q$}-beta integrals}, Trans. Amer.
  Math. Soc. \textbf{346} (1994), no.~1, 63--116.

\bibitem[Mei]{Meixner}
J.~Meixner, \emph{Orthogonale {P}olynomsysteme {M}it {E}iner {B}esonderen
  {G}estalt {D}er {E}rzeugenden {F}unktion}, J. London Math. Soc.
  \textbf{S1-9}, no.~1, 6--13.

\bibitem[Sim98]{Simon}
B.~Simon, \emph{The classical moment problem as a self-adjoint finite
  difference operator}, Adv. Math. \textbf{137} (1998), no.~1, 82--203.

\bibitem[ST43]{ShohatTamarkin}
J.~A. Shohat and J.~D. Tamarkin, \emph{The {P}roblem of {M}oments}, American
  Mathematical Society Mathematical surveys, vol. I, American Mathematical
  Society, New York, 1943.

\bibitem[Sti94]{Stieltjes}
T.-J. Stieltjes, \emph{Recherches sur les fractions continues}, Ann. Fac. Sci.
  Toulouse Sci. Math. Sci. Phys. \textbf{8} (1894), no.~4, J1--J122.

\end{thebibliography}

\end{document}